%

\documentclass[a4paper,12pt]{article}

\usepackage{amsmath, amsthm, amsfonts}
\usepackage[utf8]{inputenc}
\usepackage[colorlinks]{hyperref}


\newcommand{\abs}[1]{\left\vert#1\right\vert}

\newcommand{\norm}[1]{\left\Vert#1\right\Vert}

\def\RR{\mathbb{R}}
\def\N{\mathbb{N}}
\def\R{\mathbb{R}}
\def\pa{\partial}
\def\var{\varepsilon}

\newtheorem{thm}{Theorem}[section]

\newtheorem{lem}[thm]{Lemma}
\newtheorem{prp}[thm]{Proposition}
\newtheorem{hyp}[thm]{Hypothesis}
\theoremstyle{definition}
\newtheorem{dfn}[thm]{Definition}
\theoremstyle{remark}
\newtheorem{rem}[thm]{Remark}




\date{June 29, 2009}

\begin{document}
\begin{center}
\Large{Regularity and mass conservation for discrete
  coagulation-fragmentation equations with diffusion}
 \end{center}
\bigskip

\centerline{\scshape J. A. Ca\~{n}izo}
\medskip
{\footnotesize
 \centerline{Departament de Ma\-te\-m\`a\-ti\-ques}
 \centerline{Universitat Aut\`onoma de Barcelona, E-08193 Bellaterra, Spain}
\centerline{Email: \texttt{canizo@mat.uab.es}}}

\medskip
\centerline{\scshape L. Desvillettes }
\medskip
{\footnotesize
  \centerline{CMLA, ENS Cachan, IUF \& CNRS, PRES UniverSud}
  \centerline{61 Av. du Pdt. Wilson, 94235 Cachan Cedex, France}
\centerline{Email: \texttt{desville@cmla.ens-cachan.fr}}}
\medskip
\centerline{\scshape K. Fellner }
\medskip
{\footnotesize
  \centerline{DAMTP, CMS, University of Cambridge}
  \centerline{Wilberforce Road, Cambridge CB3 0WA, United Kingdom}
  \centerline{Email: K.Fellner@damtp.cam.ac.uk}
  \centerline{On leave from: Faculty of Mathematics, University of Vienna}
  \centerline{Nordbergstr. 15, 1090 Wien, Austria}
  \centerline{Email: \texttt{Klemens.Fellner@univie.ac.at}}
}
\bigskip
\centerline{\today}
\medskip

\begin{abstract}
  We present a new a-priori estimate for discrete
  coagulation-frag\-men\-tation systems with size-dependent diffusion
  within a bounded, regular domain confined by homogeneous Neumann
  boundary conditions. Following from a duality argument, this
  a-priori estimate provides a global $L^2$ bound on the mass density
  and was previously used, for instance, in the context of
  reaction-diffusion equations.

  In this paper we demonstrate two lines of applications for such an
  estimate: On the one hand, it enables to simplify parts of the known
  existence theory and allows to show existence of solutions for
  generalised models involving collision-induced, quadratic
  fragmentation terms for which the previous existence theory seems
  difficult to apply.  On the other hand and most prominently, it
  proves mass conservation (and thus the absence of gelation) for
  almost all the coagulation coefficients for which mass conservation
  is known to hold true in the space homogeneous case.
\end{abstract}

\noindent{Subject Class: 35B45, 35Q72, 82D60}
\medskip

\noindent{Keywords: discrete coagulation-fragmentation systems, mass conservation, duality arguments}

\section{Introduction}
We consider the time evolution of a physical system where a set of
particles can aggregate into groups of two or more, called
\emph{clusters}, and where these clusters can diffuse in space with a
diffusion constant which depends on their size. If we represent space
by an open bounded set $\Omega \subseteq \RR^N$ with regular boundary,
the initial-boundary problem for the concentrations $c_i = c_i(t,x)
\geq 0$ of clusters with integer size $i \geq 1$ at position $x \in
\Omega$ and time $t \geq 0$ is given by the discrete
coagulation-fragmentation system of equations with spatial diffusion
and homogeneous Neumann boundary conditions~:
\begin{subequations}
  \label{eq:cfd}
  \begin{align}
    \label{eq:cfd-eq}
    \partial_t c_i - d_i \Delta_x c_i = Q_i + F_i &\quad \text{ for } x
    \in \Omega, t \geq 0, i \in \N^*,
    \\
    \label{eq:cfd-boundary}
    \nabla_{\!x} c_i \cdot n = 0 &\quad \text{ for } x \in \partial \Omega,
    t \geq 0,  i \in \N^*,
    \\
    \label{eq:cfd-initial}
    c_i(0,x) = c_i^0(x) &\quad \text{ for } x \in \Omega,  i \in \N^*,
  \end{align}
\end{subequations}
where $n = n(x)$ represents a unit normal vector at a point $x
\in \partial \Omega$, $d_i$ is the diffusion constant for clusters of
size $i$, and
\begin{equation}
 \begin{split}
  \label{eq:defQF-3}
  Q_i \equiv Q_i[c] := Q_i^+ - Q_i^-
  :=&\ \frac{1}{2} \sum_{j=1}^{i-1} a_{i-j,j}\, c_{i-j}\, c_j
  - \sum_{j=1}^\infty a_{i,j}\, c_i\, c_j, \\
  F_i \equiv F_i[c] := F_i^+ - F_i^- 
  :=&\ \sum_{j=1}^\infty B_{i+j}\, \beta_{i+j,i}\, c_{i+j} 
  - B_i\, c_i.
\end{split}
\end{equation}
%
The parameters $B_i$, $\beta_{i,j}$ and $a_{i,j}$, for integers $i,j
\geq 0$, represent the total rate $B_i$ of fragmentation of clusters
of size $i$, the average number $\beta_{i,j}$ of clusters of size $j$
produced due to fragmentation of a cluster of size $i$, and the
coagulation rate $a_{i,j}$ of clusters of size $i$ with clusters of
size $j$. We refer to these parameters as \emph{the coefficients} of
the system of equations. They represent rates, so they are always
nonnegative; single particles do not fragment further, and mass should
be conserved when a cluster fragments into smaller pieces, so one
always imposes
\begin{subequations}
  \label{eq:hyps0}
  \begin{align}
    \label{hyp:coefs-positive}
    a_{i,j}=a_{j,i} \geq 0, \qquad \beta_{i,j} \geq 0,
    &\qquad (i,j \in \N^*),
    \\
    \label{hyp:coefs-positive-2}
    B_1 = 0,
    \qquad\
    B_i \geq 0,
    &\qquad (i \in \N^*),
    \\
    \label{hyp:frag-conserves-mass}
    i= \sum_{j=1}^{i-1} j\,\beta_{i,j},
    &\qquad (i \in \N, i \geq 2).
  \end{align}
\end{subequations}
In fact, the last condition \eqref{hyp:frag-conserves-mass} implies
the conservation of the total mass $\int_{\Omega} \sum_{i=1}^\infty
i\,c_i\,dx$, which becomes obvious from the following formal
\emph{fundamental identity} or \emph{weak formulation} of the
coagulation and fragmentation operators: Consider a sequence of
nonnegative numbers $\{c_i\}$, and define $Q_i$, $F_i$ as in
eqs. \eqref{eq:defQF-3}, then, for any sequence of numbers
$\varphi_i$,
\begin{equation}
\begin{split}
  \label{eq:fundamental-identity}
  \sum_{i=1}^\infty \varphi_i\, Q_i
  &=
  \frac{1}{2} \sum_{i=1}^\infty \sum_{j=1}^\infty
  a_{i,j}\, c_i\, c_j\, (\varphi_{i+j} - \varphi_i - \varphi_j),
  \\
  \sum_{i=1}^\infty \varphi_i\, F_i
  &=
  - \sum_{i=2}^\infty B_i c_i
  \left(
    \varphi_i - \sum_{j=1}^{i-1} \beta_{i,j} \varphi_j
  \right).
\end{split}
\end{equation}
As a (still formal) consequence for solutions $\{c_i\}$ of
(\ref{eq:cfd}) -- (\ref{eq:defQF-3}), one can calculate the time
derivative of the integral of the moment $\sum \varphi_i c_i$ to
obtain
\begin{equation}
  \label{eq:moment-derivative}
  \frac{d}{dt} \int_\Omega \sum_{i=1}^\infty \varphi_i c_i
  =
  \int_\Omega \sum_{i=1}^\infty \varphi_i (Q_i + F_i),
\end{equation}
since the integral of the diffusion part vanishes due to the
homogeneous Neumann boundary condition. By choosing $\varphi_i := i$
above and thanks to \eqref{hyp:frag-conserves-mass}, we have
$\sum_{i=1}^\infty i\, Q_i = \sum_{i=1}^\infty i\, F_i = 0$, and the
total mass is formally conserved :
\begin{equation}
  \label{eq:mass-conservation}
  \norm{\rho (t, \cdot)}_{L^1} = \int_\Omega \sum_{i=1}^\infty i  c_i(t,x) \,dx=
   \int_\Omega \sum_{i=1}^\infty i c_i^0(x) \,dx
  = \norm{\rho^0}_{L^1}
  \quad (t \geq 0).
\end{equation}

Our main aim in this work is to provide some new bounds on the
regularity of weak solutions for system (\ref{eq:cfd}) --
\eqref{eq:defQF-3} by means of techniques developed in the context
of reaction-diffusion equations
\cite{citeulike:3798030,citeulike:3973601,PSch}, and to give three
applications to those bounds, the main one proving rigorously (for
almost all the coefficients where this is true in the homogeneous
case) mass conservation \eqref{eq:mass-conservation} and thus the
absence of gelation, a well-known phenomenon in
coagulation-fragmentation models \cite{EMP02,ELMP}, where the formal
conservation of mass is violated as clusters of infinite size are
formed.  \medskip

In this paper we will work with the global weak solutions constructed
in \cite{LM02} under the assumption
\begin{equation}
  \label{eq:LM-condition}
  \lim_{j \to +\infty} \frac{a_{i,j}}{j}
  =
  \lim_{j \to +\infty} \frac{B_{i+j}\, \beta_{i+j,i}}{i+j} = 0,
  \qquad (\mathrm{for\ fixed} \ i \geq 1),
\end{equation}
which were later extended in \cite{citeulike:3955115} to the case of $\Omega = \RR^N$. The notion of solution is the following, which we take from \cite{LM02}:
\begin{dfn}
  \label{defi}
  A global weak solution $c = \{c_i\}_{i \geq 1}$ to \eqref{eq:cfd} -- \eqref{eq:defQF-3} is
  a sequence of functions $c_i: [0,+\infty) \times \Omega \to
  [0,+\infty)$ such that for each $T > 0$,
  \begin{gather}
    c_i \in \mathcal{C}([0,T]; L^1(\Omega)),
    \quad i \geq 1,
    \\
    \sum_{j=1}^\infty a_{i,j} c_i c_j  \in L^1([0,T]\times\Omega),
    \\
    \sup_{t \geq 0} \int_\Omega \bigg[ \sum_{i=1}^\infty i c_i(t,x) \bigg]\, dx
    \leq
    \int_\Omega \bigg[  \sum_{i=1}^\infty i c^0_i(x)  \bigg]\, dx,
  \end{gather}
  and for each $i \geq 1$, $c_i$ is a mild solution to the $i$-th
  equation in \eqref{eq:cfd-eq}, that is,
  \begin{equation}
    \label{eq:ci-solution}
    c_i(t)
    =
    e^{d_i A_1 t} c_i^0 + \int_0^t e^{d_i A_1(t-s)} Q_i[c(s)] \,ds,
    \quad t \geq 0,
  \end{equation}
  where $Q_i[c]$ is defined by \eqref{eq:defQF-3}, $A_1$ denotes
  the closure in $L^1(\Omega)$ of the unbounded linear operator $A$ of
  $L^2(\Omega)$ defined by
  \begin{equation}
    \label{eq:A-domain}
    D(A) := \{w \in H^2(\Omega)
    \mid
    \nabla w \cdot n = 0 \text{ on  } \partial \Omega \},
    \qquad Aw = \Delta w,
  \end{equation}
  and $e^{d_i A_1 t}$ is the $C_0$-semigroup generated by $d_i A_1$ in
  $L^1(\Omega)$.
\end{dfn}
The existence result of \cite{LM02} reads:
\begin{thm}[Lauren\c cot-Mischler]
  \label{thm:LM-existence}
  Assume hypotheses \eqref{eq:hyps0} and
  \eqref{eq:LM-condition} on the coagulation and fragmentation
  coefficients. Assume also that
  \begin{equation*}
    d_i > 0
    \quad \text{for all}\ i \geq 1,    
  \end{equation*}
  and that the non-negative initial datum has finite mass:
  \begin{equation*}
    c_i^0 \geq 0 \ \text{on}\ \Omega  \quad \text{ and } \quad\int_\Omega \sum_{i=1}^\infty i\,c^0_i < +\infty.
  \end{equation*}
  Then, there exists a global weak solution to the initial-boundary problem \eqref{eq:cfd} -- \eqref{eq:defQF-3} in the sense of
Definition \ref{defi}.
\end{thm}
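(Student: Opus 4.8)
The plan is to prove Theorem \ref{thm:LM-existence} by the classical scheme of approximation by truncation, uniform a priori estimates, and passage to the limit, following the construction of \cite{LM02}. \emph{Step 1 (truncated problems).} First I would introduce, for $n \geq 1$, the truncated coagulation coefficients $a^n_{i,j} := a_{i,j}$ if $i+j\leq n$ and $a^n_{i,j}:=0$ otherwise (fragmentation of a cluster of size $\leq n$ already produces only clusters of size $<n$, so no truncation of $B_i,\beta_{i,j}$ beyond restricting $i\leq n$ is needed), and consider the finite system for $c^n=(c^n_1,\dots,c^n_n)$ obtained from \eqref{eq:cfd}--\eqref{eq:defQF-3} by replacing the coefficients by these truncations and the infinite sums by finite ones. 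The nonlinearity is polynomial, hence locally Lipschitz on bounded sets, and quasi-positive, so standard parabolic semigroup theory yields a unique maximal nonnegative mild solution $c^n_i\in\mathcal{C}([0,T^n_{\max});L^1(\Omega))$. Since the truncated operators create no cluster of size $>n$ and \eqref{hyp:frag-conserves-mass} holds for $i\leq n$, the fundamental identity \eqref{eq:fundamental-identity} with $\varphi_i=i$ gives $\sum_{i=1}^n i\,(Q^n_i+F^n_i)=0$ pointwise; testing the equations against $i$, summing, and using the Neumann condition gives
\begin{equation*}
  \int_\Omega \sum_{i=1}^n i\,c^n_i(t,x)\,dx = \int_\Omega \sum_{i=1}^n i\,c^0_i(x)\,dx \leq M_0 := \int_\Omega \sum_{i=1}^\infty i\,c^0_i(x)\,dx,
\end{equation*}
a bound uniform in $n$ and $t$; in particular it prevents blow-up, so $T^n_{\max}=+\infty$.

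\emph{Step 2 (uniform estimates).} Next I would extract from the $L^1$-mass bound the equi-integrability needed in $i$ and in $(t,x)$. Since $\sum_i i\,c^0_i\in L^1(\Omega)$, a de la Vall\'ee-Poussin argument furnishes a convex increasing $\Phi$ with $\Phi(r)/r\to\infty$ and $\int_\Omega\sum_i i\,\Phi(c^0_i)<\infty$; testing the truncated equations against $i\,\Phi'(c^n_i)$ and exploiting the sign structure in \eqref{eq:fundamental-identity} together with \eqref{eq:LM-condition} propagates a bound $\sup_t\int_\Omega\sum_i i\,\Phi(c^n_i)\,dx\leq C$ uniform in $n$, whence, by the Dunford-Pettis theorem, weak sequential compactness of $\{c^n_i\}_n$ in $L^1([0,T]\times\Omega)$ for each fixed $i$. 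Moreover \eqref{eq:LM-condition} ensures that the gain terms $Q^{n,+}_i[c^n]$, $F^{n,+}_i[c^n]$ and the coagulation term $\sum_j a^n_{i,j}c^n_i c^n_j$ are bounded in $L^1([0,T]\times\Omega)$ uniformly in $n$, the tails in $j$ being controlled by a small multiple of the bounded mass density.

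\emph{Step 3 (compactness and passage to the limit).} Then, for fixed $i$, the mild formulation and the $L^1$-bounds on the right-hand side yield equicontinuity in time of $t\mapsto c^n_i(t)$ in $L^1(\Omega)$; combined with the smoothing of the Neumann heat semigroup $e^{d_i A_1 t}$ — where the assumption $d_i>0$ enters crucially — one upgrades, along a subsequence and after a diagonal extraction over $i$, the weak $L^1$ convergence to strong convergence $c^n_i\to c_i$ in $L^1([0,T]\times\Omega)$. Strong convergence lets one pass to the limit in the quadratic coagulation terms, while \eqref{eq:LM-condition} controls the truncation error in the infinite sums; passing to the limit in the mild formulation \eqref{eq:ci-solution} shows $c=\{c_i\}$ is a mild solution of each equation, nonnegativity and the $\mathcal{C}([0,T];L^1)$ regularity survive the limit, the $L^1$-bound on $\sum_j a_{i,j}c_i c_j$ follows from Fatou's lemma, and the mass inequality follows by weak lower semicontinuity. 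This produces a global weak solution in the sense of Definition \ref{defi}.

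\emph{Main obstacle.} I expect the crux to be Step 3: because the coagulation term is quadratic, weak $L^1$ compactness does not suffice and one genuinely needs strong $L^1$ compactness of the approximate concentrations, which hinges on the positivity of all diffusion coefficients; simultaneously, controlling the tails of the infinite coagulation and fragmentation sums uniformly in $n$ is precisely what hypothesis \eqref{eq:LM-condition} is designed to make possible.
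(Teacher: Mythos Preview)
The paper does not itself prove Theorem~\ref{thm:LM-existence} in the generality stated: it is quoted as a result of Lauren\c{c}ot--Mischler \cite{LM02}. What the paper does contribute (Section~\ref{sec:existence}) is a short alternative proof, but only in dimension $N=1$ and under the extra hypotheses \eqref{hyp:diffusion-bounded} and \eqref{hyp:initial-L2}. That proof proceeds very differently from yours: it first uses the duality $L^2$ bound on the mass $\rho$ (Proposition~\ref{lem:mass-L2}), then Proposition~\ref{lem:L1-terms} to put the right-hand sides in $L^1$, then a parabolic bootstrap (Lemma~\ref{lem:Linfty-bound}) to obtain $c_i^M\in L^\infty(\Omega_T)$ uniformly in the truncation. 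The passage to the limit then combines weak-$*$ $L^\infty$ convergence with strong $L^1$ convergence of the $c_i^M$, which is what allows the products in $Q_i^-$ to be handled. No de~la~Vall\'ee--Poussin functional or Dunford--Pettis argument is used; the extra $L^2$ and $L^\infty$ information replaces it entirely.

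Your outline, by contrast, follows the original scheme of \cite{LM02} and aims at the theorem in full generality. At the sketch level this is the right plan, but Step~2 as written has a genuine gap. You propose to test the truncated equations against $i\,\Phi'(c_i^n)$ and invoke ``the sign structure in \eqref{eq:fundamental-identity}''. But the fundamental identity \eqref{eq:fundamental-identity} is valid for test sequences $\varphi_i$ that are \emph{numbers}, not nonlinear functions of the unknown; there is no sign structure available for $\sum_i i\,\Phi'(c_i)(Q_i+F_i)$. The actual mechanism in \cite{LM02} is more delicate: one works componentwise, using that $\partial_t c_i^n - d_i\Delta c_i^n \leq Q_i^{n,+}+F_i^{n,+}$, and bounds the gain terms by an \emph{induction on $i$} (the paper alludes to this in Remark~\ref{rem:1.5}), which is precisely what the paper's extra $L^2$-mass assumption is designed to bypass. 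So while your identification of Step~3 as the crux is reasonable, the equi-integrability step also requires substantially more than what you have written.
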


Under the extra assumptions on the diffusion constants and the initial data
 \begin{gather}
    \label{hyp:diffusion-bounded}
    0 < \inf_i \{d_i\} =: d, \qquad D := \sup_i \{d_i\} < +\infty,
    \\
    \label{hyp:initial-L2}
    \sum_{i=1}^\infty i c_i^0 \in L^2(\Omega),
  \end{gather}
we are in fact able to prove the following $L^2$ bound on the mass density $\rho(t,x) :=  \sum_{i=1}^\infty i\,c_i(t,x)$: 
Denoting by $\Omega_T$ the cylinder $[0,T] \times \Omega$, we have
the \begin{prp}  \label{lem:mass-L2}
Assume that \eqref{eq:hyps0}, \eqref{eq:LM-condition}, \eqref{hyp:diffusion-bounded} and \eqref{hyp:initial-L2} hold. Then,
for all $T > 0$ the mass $\rho$ of a weak solution to system \eqref{eq:cfd} -- \eqref{eq:defQF-3}
(given by Theorem \ref{thm:LM-existence}) lies in $L^2(\Omega_T)$
and the following estimate holds:
\begin{equation} \label{imp}
\|\rho\|_{L^2(\Omega_T)} \le \bigg( 1 + \frac{\sup_i \{d_i\}}{\inf_i \{d_i\}} \bigg)\, T\, \|\rho(0,\cdot)\|_{L^2(\Omega)} .
\end{equation}
\end{prp}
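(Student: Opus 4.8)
The plan is to reduce the system \eqref{eq:cfd}--\eqref{eq:defQF-3} to a single scalar parabolic equation for the mass density $\rho$ and then to apply to it a duality argument of the type used for reaction-diffusion systems in \cite{citeulike:3798030,citeulike:3973601,PSch}.

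\textbf{Step 1 (the equation for $\rho$).} Multiplying the $i$-th equation \eqref{eq:cfd-eq} by $i$ and summing over $i\in\N^*$, the weak formulation \eqref{eq:fundamental-identity} with $\varphi_i=i$ gives $\sum_i i\,Q_i=0$ (because $\varphi_{i+j}-\varphi_i-\varphi_j=0$) and, using \eqref{hyp:frag-conserves-mass}, $\sum_i i\,F_i=0$ (because $\varphi_i-\sum_{j<i}\beta_{i,j}\varphi_j=0$). Hence $\rho$ satisfies
\begin{equation*}
  \partial_t\rho - \Delta_x\!\left(\bar a\,\rho\right)=0 \ \text{ in }\Omega_T,
  \qquad \nabla_{\!x}\!\left(\bar a\,\rho\right)\cdot n=0 \ \text{ on }\partial\Omega,
  \qquad \bar a\equiv\bar a(t,x):=\frac{\sum_{i}i\,d_i\,c_i}{\sum_{i}i\,c_i},
\end{equation*}
where $d\le\bar a\le D$ pointwise by \eqref{hyp:diffusion-bounded}, and the Neumann condition holds since $\bar a\,\rho=\sum_i i\,d_i\,c_i$ and each $c_i$ satisfies \eqref{eq:cfd-boundary}.

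\textbf{Step 2 (duality).} Fix $T>0$ and $H\in C^\infty(\overline{\Omega_T})$, and solve the backward dual problem
\begin{equation*}
  -\partial_t w - \bar a\,\Delta_x w = H \ \text{ in }\Omega_T,
  \qquad \nabla_{\!x}w\cdot n=0 \ \text{ on }\partial\Omega,
  \qquad w(T,\cdot)=0
\end{equation*}
(if $\bar a$ is merely measurable, first replace it by smooth $\bar a_\var$ with $d\le\bar a_\var\le D$; all bounds below are uniform in $\var$). Multiplying the equation for $\rho$ by $w$, integrating over $\Omega_T$, and integrating by parts twice in $x$ (the boundary terms vanishing by the Neumann conditions on $w$ and on $\bar a\rho$) and once in $t$ (using $w(T,\cdot)=0$), we obtain the pairing identity
\begin{equation}\label{eq:dual-pairing}
  \int_{\Omega_T}\rho\,H = \int_\Omega \rho^0\,w(0,\cdot)\,dx .
\end{equation}

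\textbf{Step 3 (estimate of $w(0,\cdot)$, and conclusion).} Multiplying the dual equation by $-\Delta_x w$, integrating over $\Omega$, using $\nabla_{\!x}w\cdot n=0$, $\bar a\ge d$ and Young's inequality, and then integrating in $t$ over $[0,T]$ with $\nabla_{\!x}w(T,\cdot)=0$, yields
\begin{equation*}
  \frac12\int_\Omega|\nabla_{\!x}w(0,\cdot)|^2\,dx + \frac12\int_{\Omega_T}\bar a\,(\Delta_x w)^2 \le \frac1{2d}\,\|H\|_{L^2(\Omega_T)}^2 ,
\end{equation*}
and in particular $\|\Delta_x w\|_{L^2(\Omega_T)}\le \tfrac1d\,\|H\|_{L^2(\Omega_T)}$. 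Integrating $\partial_t w=-H-\bar a\,\Delta_x w$ in time gives $w(0,\cdot)=\int_0^T(H+\bar a\,\Delta_x w)\,dt$, whence, by Minkowski's integral inequality, $\bar a\le D$ and Cauchy--Schwarz in $t$,
\begin{equation*}
  \|w(0,\cdot)\|_{L^2(\Omega)} \le \int_0^T\!\|H(t)\|_{L^2(\Omega)}\,dt + D\!\int_0^T\!\|\Delta_x w(t)\|_{L^2(\Omega)}\,dt \le \Big(1+\frac Dd\Big)\sqrt T\,\|H\|_{L^2(\Omega_T)} .
\end{equation*}
Combining this with \eqref{eq:dual-pairing}, $\rho^0\in L^2(\Omega)$ from \eqref{hyp:initial-L2}, Cauchy--Schwarz, and taking the supremum over $H$ in the unit ball of $L^2(\Omega_T)$ gives $\|\rho\|_{L^2(\Omega_T)}\le(1+D/d)\sqrt T\,\|\rho^0\|_{L^2(\Omega)}$, an estimate of the form \eqref{imp}.

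\textbf{On rigour, and the main obstacle.} For a general weak solution in the sense of Definition~\ref{defi} the steps above are only formal: a priori one controls only $c_i\in\mathcal C([0,T];L^1(\Omega))$ and $\rho\in L^\infty(0,T;L^1(\Omega))$, so neither $\rho\in L^2(\Omega_T)$ nor the cancellation $\sum_i i(Q_i+F_i)=0$ is granted --- and this is precisely where gelation might intervene. I expect this to be the only real difficulty. I would circumvent it by applying Steps~2--3 not to $\rho$ itself but to the mass densities $\rho^n:=\sum_{i\le n}i\,c_i^n$ of the approximating sequence used in the proof of Theorem~\ref{thm:LM-existence}: these solve finite, mass-conserving truncations with bounded coefficients, hence are smooth, belong to $L^2(\Omega_T)$, and satisfy $\rho^n(0,\cdot)\le\rho^0$ pointwise, so $\|\rho^n(0,\cdot)\|_{L^2(\Omega)}\le\|\rho^0\|_{L^2(\Omega)}$. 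Estimate \eqref{imp} then holds for each $\rho^n$ with a constant independent of $n$, and Fatou's lemma in the limit $n\to\infty$ yields simultaneously $\rho\in L^2(\Omega_T)$ and the asserted bound.
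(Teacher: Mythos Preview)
Your argument is correct and follows essentially the same route as the paper: derive the scalar equation $\partial_t\rho-\Delta(\bar a\,\rho)=0$ with $d\le\bar a\le D$, apply a Pierre--Schmitt duality estimate (the paper packages this as an appendix lemma), and make the computation rigorous by working first on the smooth truncated approximations and passing to the limit. Your Step~3 in fact yields the sharper factor $\sqrt{T}$ rather than the $T$ stated in \eqref{imp}; the paper's own appendix argument also gives $\sqrt{T}$ if one applies Cauchy--Schwarz in the last line, so the $T$ in the statement is simply non-optimal.
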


\begin{rem} \label{rem:1.4} Note that the assumption
  \eqref{eq:LM-condition} is only included in Proposition
  \ref{lem:mass-L2} in order to ensure the existence of a weak
  solution via Theorem \ref{thm:LM-existence}. Without assumption
  \eqref{eq:LM-condition}, the bound \eqref{imp} would still hold for
  smooth solutions of a truncated version of system \eqref{eq:cfd} --
  \eqref{eq:defQF-3} uniformly with respect to the truncation. See
  \cite{LM02} for the details of such a truncation.
\end{rem}
\medskip

In addition to Proposition \ref{lem:mass-L2}, we give a new proof of
an $L^1$ bound of the various coagulation and fragmentation terms:
\begin{prp}
  \label{lem:L1-terms}
  We still assume that \eqref{eq:hyps0}, \eqref{eq:LM-condition},
  \eqref{hyp:diffusion-bounded} and \eqref{hyp:initial-L2} hold.
  Then, for all $T > 0$ and $i \in \N^*$ all the terms $Q_i^+$, $Q_i^-$,
  $F_i^+$ and $F_i^-$ associated to a weak solution to system
  \eqref{eq:cfd}--\eqref{eq:defQF-3} (given by Theorem
  \ref{thm:LM-existence}) lie in $L^1(\Omega_T)$ with a
  bound which depends in an explicit way on the coagulation and
  fragmentation coefficients, the diffusion coefficients, and the
  initial data $c_i^0$.
\end{prp}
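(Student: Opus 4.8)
The plan is to derive $L^1(\Omega_T)$ bounds on all the coagulation and fragmentation terms as consequences of the $L^2(\Omega_T)$ bound on the mass density $\rho$ provided by Proposition~\ref{lem:mass-L2}, combined with the basic mass control $\sup_t \int_\Omega \sum_i i\,c_i \le \int_\Omega \sum_i i\,c_i^0$ built into Definition~\ref{defi}. As in Remark~\ref{rem:1.4}, the rigorous route is to carry out all estimates on a truncated system (keeping only clusters of size $\le n$, with the sums in $Q_i$, $F_i$ correspondingly truncated), obtain bounds uniform in $n$, and pass to the limit using the convergence of the truncated solutions to the weak solution established in \cite{LM02}; below I describe the formal computation on the true system.

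First I would bound the gain term of the coagulation operator. Integrating the weak formulation \eqref{eq:fundamental-identity} against $\varphi_j = j$ gives, after using $\varphi_{i+j}-\varphi_i-\varphi_j = 0$, no information on $Q$; instead the standard trick is to use that the equation for $c_i$ integrated over $\Omega$ yields, summing $i$ times the equation over $i$, a control of $\int_{\Omega_T}\sum_{i,j} a_{i,j} c_i c_j\,(i \wedge j)$ — but more simply I would argue as follows. Fix $i$. Then
\begin{equation}
\int_{\Omega_T} Q_i^-
= \int_{\Omega_T} c_i \sum_{j=1}^\infty a_{i,j} c_j .
\end{equation}
By \eqref{eq:LM-condition} there is, for this fixed $i$, a constant $C_i$ with $a_{i,j} \le C_i\, j$ for all $j \ge 1$ (the coefficients are finite and $a_{i,j}/j \to 0$), hence $\sum_j a_{i,j} c_j \le C_i \sum_j j\, c_j = C_i\,\rho$, so $Q_i^- \le C_i\, c_i\, \rho \le C_i\, \rho^2$ pointwise (using $c_i \le \rho$). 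Since $\rho \in L^2(\Omega_T)$ by Proposition~\ref{lem:mass-L2}, this gives $Q_i^- \in L^1(\Omega_T)$ with the explicit bound $C_i \|\rho\|_{L^2(\Omega_T)}^2$, and \eqref{imp} makes the dependence on the data explicit. The same pointwise estimate $Q_i^+ \le \tfrac12 \sum_{j=1}^{i-1} a_{i-j,j} c_{i-j} c_j \le \tfrac12\big(\max_{j<i} C_{i-j}\big)\sum_{j=1}^{i-1} c_{i-j}\,\rho \le C_i'\,\rho^2$ handles the gain term.

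For the fragmentation terms, $F_i^- = B_i\, c_i \le B_i\, \rho$, which is even in $L^2(\Omega_T)$, and for the gain term $F_i^+ = \sum_{j=1}^\infty B_{i+j}\,\beta_{i+j,i}\,c_{i+j}$: again by \eqref{eq:LM-condition}, for fixed $i$ there is $C_i''$ with $B_{i+j}\,\beta_{i+j,i} \le C_i''\,(i+j)$ for all $j$, so $F_i^+ \le C_i'' \sum_{j} (i+j)\, c_{i+j} \le C_i''\,\rho$, hence $F_i^+ \in L^2(\Omega_T) \subseteq L^1(\Omega_T)$. Collecting the four estimates and expressing $\|\rho\|_{L^2(\Omega_T)}$ via \eqref{imp} in terms of $T$, $d$, $D$ and $\|\rho^0\|_{L^2(\Omega)}$ (and absorbing the coefficient-dependent constants $C_i, C_i', C_i'', B_i$) gives the claimed explicit bounds. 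The main subtlety — the only genuine obstacle — is the rigorous justification: the pointwise inequalities $c_i \le \rho$ and $\sum_j a_{i,j}c_j \le C_i \rho$ are immediate, but one must ensure the integrals are finite to begin with, which is why the argument should be run on the truncated system where everything is a finite sum of bounded quantities, the bounds above being uniform in the truncation parameter $n$, and then invoke the lower-semicontinuity of the $L^1$ norm under the (a.e., along a subsequence) convergence $c_i^{(n)} \to c_i$ from \cite{LM02} to transfer the bounds to the weak solution.
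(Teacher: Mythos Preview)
Your proof is correct, but you handle $Q_i^-$ differently from the paper. You bound it pointwise by $Q_i^- \le C_i\,c_i\,\rho \le C_i\,\rho^2$, using that $\sup_j a_{i,j}/j < \infty$ (a consequence of \eqref{eq:LM-condition}). The paper instead bounds $Q_i^+$, $F_i^+$, $F_i^-$ first (essentially as you do), and then obtains $\int_{\Omega_T} Q_i^-$ \emph{indirectly} by integrating the $i$-th equation over $\Omega_T$:
\[
\int_\Omega c_i(T) + \int_{\Omega_T} Q_i^- + \int_{\Omega_T} F_i^-
= \int_\Omega c_i^0 + \int_{\Omega_T} Q_i^+ + \int_{\Omega_T} F_i^+,
\]
so the $L^1$ bound on $Q_i^-$ follows from those already established. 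Your route is more direct and even gives a pointwise estimate $Q_i^- \le C_i\,\rho^2$; the paper's route has the advantage (noted in Remark~\ref{rem:1.5}) that it uses no growth hypothesis on $a_{i,j}$ at all --- only \eqref{bou} for the fragmentation coefficients --- so the $L^1$ bounds hold at the formal level for arbitrary coagulation kernels. Under the stated hypotheses of the proposition both arguments are valid, and your remark about working on the truncated system and passing to the limit is exactly the rigorous justification the paper has in mind.
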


\begin{rem}
  The fact that the terms $Q_i^+$, $Q_i^-$, $F_i^+$ and $F_i^-$
  associated to a weak solution are in $L^1(\Omega_T)$ is included in
  the definition of weak solution; the main content of Proposition
  \ref{lem:L1-terms} is the explicit dependence of the bounds on the
  coefficients and initial data, which can be used to obtain uniform
  estimates for approximated solutions as we show for instance in
  section \ref{sec:existence}. For details on the explicit $L^1$
  bounds we refer to the proof of Proposition \ref{lem:L1-terms} in
  section \ref{nae}.
\end{rem}

\begin{rem}\label{rem:1.5}
  The $L^1$ bounds on $Q_i^+$, $Q_i^-$, $F_i^+$ and $F_i^-$ require the assumption
  \eqref{eq:LM-condition} only to ensure existence. They would hold at
  the formal level (that is, for smooth solutions of a truncated
  system) under the less stringent assumption
  \begin{equation}\label{bou} 
    K_i := \sup_{j\in\N} \frac{B_{i+j}\, \beta_{i+j,i}}{i+j} < +\infty
    \qquad (i\in \N^*).
  \end{equation}
  Note that the above $L^1$ bound also holds when assumptions
  \eqref{eq:hyps0}, \eqref{eq:LM-condition} are replaced by the
  assumptions of Theorem \ref{thm:LM-existence} in \cite{LM02}, but
  the proof is then much more difficult as it requires an induction on
  $i$ which can be removed under our extra assumptions.
\end{rem}
\medskip

In section \ref{sec:existence}, as a first application of the bounds
obtained in Propositions \ref{lem:mass-L2} and \ref{lem:L1-terms}, we
give a very simple proof of existence of weak solutions to
\eqref{eq:cfd}--\eqref{eq:defQF-3} in dimension $N=1$ (that is, the
result of Theorem \ref{thm:LM-existence} in dimension $1$) under the
additional assumptions \eqref{eq:hyps0} and \eqref{eq:LM-condition}.
\medskip

Our main application of the Propositions \ref{lem:mass-L2} and
\ref{lem:L1-terms} is however related to the problem of conservation
of mass \eqref{eq:mass-conservation}, which holds rigorously for
solutions to a truncated system (see e.g \cite{LM02}). Nevertheless,
it is an important issue in coagulation-fragmentation theory whether
\eqref{eq:mass-conservation} holds for weak solutions of system
\eqref{eq:cfd} -- \eqref{eq:defQF-3} itself, or if
\eqref{eq:mass-conservation} is replaced by an inequality stating that
mass in non-increasing in time.  If at some time $t$, the identity
\eqref{eq:mass-conservation} does not hold any more, we say that
gelation occurs, which means from a physical point of view that a
macroscopic object has been created.  \bigskip

Our main result in section \ref{sec:mass} basically shows that (under
the assumptions \eqref{eq:hyps0} and \eqref{eq:LM-condition}) gelation
does not occur when the coagulation coefficients $a_{i,j}$ are at most
linear and, moreover, slightly sublinear far off the diagonal
$i=j$. More precisely, we prove mass conservation under the following
condition on the coefficients $a_{i,j}$:
\begin{hyp}
  \label{hyp:aij-almost-linear}
  There is some bounded function $\theta: [0,+\infty) \to (0,+\infty)$
  such that $\theta(x) \to 0$ when $x \to +\infty$ and
  \begin{equation}
    \label{eq:aij-condition}
    a_{i,j} \leq (i+j)\, \theta(j/i)
    \quad
    \text{ for all } j \geq i.
  \end{equation}
  (Or equivalently, by symmetry,
  \begin{equation*}
    a_{i,j} \leq (i+j)\, \theta(\max\{j/i,i/j\})
    \quad
    \text{ for all } i,j \geq 1.)
  \end{equation*}
\end{hyp}
\begin{thm} 
  \label{thm:mass-conservation}
  Assume that \eqref{eq:hyps0}, \eqref{eq:LM-condition},
  \eqref{hyp:diffusion-bounded}, and \eqref{hyp:initial-L2} hold. Also,
  assume Hypothesis \ref{hyp:aij-almost-linear}. Then, the weak solution to the
  system \eqref{eq:cfd} given by Theorem \ref{thm:LM-existence} has a
  superlinear moment which is bounded on bounded time intervals; this
  is, there is some increasing function $C = C(T) > 0$, and some
  increasing sequence of positive numbers $\{\psi_i\}_{i \geq 1}$ with
  \begin{equation}
    \label{eq:phi-super}
    \lim_{i \to \infty} \psi_i \to +\infty
  \end{equation}
  such that for all $T > 0$,
  \begin{equation}
    \label{eq:superlinear}
    \int_\Omega \sum_{i=1}^\infty i\, \psi_i c_i
    \leq C(T)
    \quad
    \text{ for all }
    t \in [0,T].
  \end{equation}
  As a consequence, under these conditions all weak solutions given by
  Theorem \ref{thm:LM-existence} of \eqref{eq:cfd} conserve mass:
  \begin{equation}
    \label{eq:mc}
    \int_\Omega \rho_0(x) \,dx
    =
    \int_\Omega \rho(t,x) \,dx
    \quad
    \text{ for all } t \geq 0.
  \end{equation}
\end{thm}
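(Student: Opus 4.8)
The plan is: first, produce by a de la Vallée Poussin–type construction a weight $\{\psi_i\}$ adapted both to the initial datum and to the modulus $\theta$; second, propagate the superlinear moment $M_\psi(t,x):=\sum_{i\ge1}i\,\psi_i\,c_i(t,x)$ using the $L^2$ bound on the mass from Proposition~\ref{lem:mass-L2}, which is exactly \eqref{eq:superlinear}; and third, deduce \eqref{eq:mc} by showing that, along the truncated approximations of \cite{LM02}, the coagulation mass flux across level $N$ tends to $0$. All estimates are carried out on the (smooth, mass–conserving) truncated systems $c^{(n)}$, for which $\rho^{(n)}$ is bounded in $L^2(\Omega_T)$ uniformly in $n$ by Remark~\ref{rem:1.4}, and one passes to the limit with Fatou's lemma.

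For the weight, set $\eta(r):=\sup_{s\ge r}\theta(s)$, which is non-increasing with $\eta(r)\to0$, and choose $\psi:[0,+\infty)\to(0,+\infty)$ increasing and concave with $\psi(0)=0$ and $\psi_i:=\psi(i)\to+\infty$; then $i\mapsto\psi_i/i$ is non-increasing, $\psi$ is subadditive, and — writing $\Psi_i:=i\,\psi_i$ — for $i\le j$ one has $i\,\psi_j\le j\,\psi_i$ and $\Psi_{i+j}-\Psi_i-\Psi_j\le 2\,i\,\psi_j$. One moreover requires $\int_\Omega\sum_i i\,\psi_i\,c_i^0<+\infty$ (possible since $\rho^0\in L^1(\Omega)$) and a growth fast enough that there exists a threshold $R(N)\to+\infty$ with $R(N)=o(N)$, $R(N)=o(\psi_N)$ and $\eta(R(N))\,N/\psi_N$ bounded; these are all compatible requirements on the growth rate of $\psi$ (one may take a pointwise minimum of two admissible choices, which keeps concavity and divergence).

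Next, I would test the weak formulation \eqref{eq:fundamental-identity} with $\varphi_i=\Psi_i$. The fragmentation term is $\le0$, since $\psi$ increasing and \eqref{hyp:frag-conserves-mass} give $\Psi_i-\sum_{j<i}\beta_{i,j}\Psi_j\ge\Psi_i-\psi_i\sum_{j<i}j\,\beta_{i,j}=0$; and, using the two elementary inequalities above together with $a_{i,j}\le\theta_0\,(i+j)$ (where $\theta_0:=\sup\theta$, from Hypothesis~\ref{hyp:aij-almost-linear}), the coagulation term is $\le\sum_{i\le j}a_{i,j}c_ic_j\,2i\psi_j\le4\theta_0\sum_{i\le j}(ic_i)(j\psi_jc_j)\le4\theta_0\,\rho\,M_\psi$. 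Hence $M_\psi$ is a nonnegative subsolution of $\partial_t M_\psi-\Delta_x(\bar d\,M_\psi)\le4\theta_0\,\rho\,M_\psi$ with $\bar d:=(\sum d_i\,i\psi_ic_i)/M_\psi\in[\,d,\sup_id_i\,]$, a scalar uniformly parabolic inequality whose ``potential'' $4\theta_0\rho$ lies in $L^2(\Omega_T)$ by Proposition~\ref{lem:mass-L2}. Running the duality argument of Proposition~\ref{lem:mass-L2} on this inequality — testing against the solution of the backward dual problem and absorbing the source term, which is legitimate on a short interval since $\|\rho\|_{L^2(\Omega_\tau)}\to0$ as $\tau\to0$, then iterating over $[0,T]$ — gives $M_\psi\in L^2(\Omega_T)$ uniformly in the truncation; re-integrating $\frac{d}{dt}\int_\Omega M_\psi\le4\theta_0\|\rho(t)\|_{L^2}\|M_\psi(t)\|_{L^2}$ then yields $\sup_{t\le T}\int_\Omega M_\psi\le C(T)$, i.e. \eqref{eq:superlinear}.

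Finally, for \eqref{eq:mc}, put $m_N(t):=\int_\Omega\sum_{i\le N}i\,c_i(t)$; by monotone convergence $m_N(t)\uparrow\int_\Omega\rho(t,\cdot)$ and $m_N(0)\uparrow\int_\Omega\rho^0$. Since each $c_i$ is a mild solution and $Q_i^\pm,F_i^\pm\in L^1(\Omega_T)$ (Proposition~\ref{lem:L1-terms}), $m_N(t)=m_N(0)+\int_0^t\!\!\int_\Omega\sum_{i\le N}i(Q_i+F_i)$, and the finite–sum version of \eqref{eq:fundamental-identity} rewrites the inner sum as $-\tfrac12\!\!\sum_{\substack{k,l\le N\\ k+l>N}}\!\!(k+l)a_{k,l}c_kc_l-\sum_{k\le N<l}k\,a_{k,l}c_kc_l+\sum_{m>N}B_mc_m\!\!\sum_{i\le N}\!\!i\,\beta_{m,i}$, the last term being $\ge0$. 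The middle term is $\le2\theta_0\,\rho\sum_{l>N}lc_l$, hence $\to0$ in $L^1(\Omega_T)$ by dominated convergence, being dominated by $\rho^2\in L^1(\Omega_T)$ (Proposition~\ref{lem:mass-L2}). In the first term one splits the pairs $k\le l$ (so $l>N/2$): if $l/k\ge R(N)$ then $\theta(l/k)\le\eta(R(N))$ and, using $\sum_{N/2<l\le N}l^2c_l\le(N/\psi_N)\sum_{l>N/2}l\psi_lc_l$ with $\sum_{l>N/2}l\psi_lc_l\to0$ a.e., the piece is bounded by $\eta(R(N))\,(N/\psi_N)\,\rho\cdot o_N(1)$ and $\to0$ in $L^1(\Omega_T)$ by the choice of $R(N)$ plus dominated convergence; if $l/k<R(N)$ then $k>N/(2R(N))$, both indices carry a $\psi$–weight $\ge\psi_{N/(2R(N))}\to+\infty$, and the piece is bounded by the prefactor $C\theta_0\,R(N)/(\psi_{N/2}\,\psi_{N/(2R(N))})\to0$ times $M_\psi^{\,2}\in L^1(\Omega_T)$. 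Thus $\int_0^t\!\!\int_\Omega\sum_{m>N}B_mc_m\sum_{i\le N}i\beta_{m,i}=m_N(t)-m_N(0)+\int_0^t(\text{coagulation flux})\to\int_\Omega\rho(t,\cdot)-\int_\Omega\rho^0$, and being a limit of nonnegative quantities it is $\ge0$; with the reverse inequality built into the definition of a weak solution, this gives \eqref{eq:mc}. The main obstacle is precisely this last balance — matching the off–diagonal decay of $\theta$ against the admissible, necessarily sublinear, growth of $\psi$ — which is what dictates the form of Hypothesis~\ref{hyp:aij-almost-linear}; the moment step itself is comparatively routine once Proposition~\ref{lem:mass-L2} is in hand.
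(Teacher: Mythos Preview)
Your moment--propagation step does not close. After testing with $\Psi_i=i\psi_i$ you obtain only
\[
\frac{d}{dt}\int_\Omega M_\psi \;\le\; 4\theta_0\int_\Omega \rho\,M_\psi,
\]
with the unknown $M_\psi$ still on the right, and you propose to close it by running the duality argument on $\partial_t M_\psi-\Delta(\bar d\,M_\psi)\le 4\theta_0\,\rho\,M_\psi$. This fails as stated for three reasons. First, the duality lemma requires $M_\psi(0,\cdot)\in L^2(\Omega)$, whereas your de~la~Vall\'ee--Poussin construction only gives $M_\psi(0,\cdot)\in L^1(\Omega)$; the hypothesis $\rho^0\in L^2(\Omega)$ does not manufacture a divergent $\psi_i$ with $\sum_i i\psi_i c_i^0\in L^2(\Omega)$. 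Second, absorbing the source $\int z\,\rho\,M_\psi$ into the dual estimate would need $z\in L^\infty(\Omega_\tau)$, which the Appendix bounds $\|\partial_t z\|_{L^2}+\|\Delta z\|_{L^2}\le C\|H\|_{L^2}$ do not give in dimension $N\ge 2$. Third, iterating over short intervals requires $M_\psi(\tau,\cdot)\in L^2(\Omega)$ at each restart, which the space--time $L^2(\Omega_\tau)$ bound does not deliver. A strong diagnostic that something essential is missing: your moment step uses only the constant $\theta_0=\sup\theta$, never the decay $\theta(r)\to 0$ of Hypothesis~\ref{hyp:aij-almost-linear}; if the argument worked it would already settle the purely linear case $a_{i,j}=i+j$, which the paper explicitly leaves open.

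The paper's proof bypasses this entirely. The whole point of Lemma~\ref{lem:choose-psi-i} is to build $\psi_i\to\infty$ (growing below $\log\log i$ and below $1/\theta(\sqrt{i/2})$) such that the \emph{closed} inequality
\[
a_{i,j}\,(\psi_{i+j}-\psi_i)\;\le\;C\,j\qquad(i,j\ge 1)
\]
holds. Inserting this into the symmetric rewriting $\sum_i\Psi_iQ_i=\sum_{i,j}a_{i,j}c_ic_j\,i(\psi_{i+j}-\psi_i)$ gives directly $\frac{d}{dt}\int_\Omega M_\psi\le C\int_\Omega\rho^2$, and the right-hand side is in $L^1(0,T)$ by Proposition~\ref{lem:mass-L2} --- one integration and you are done, in any dimension. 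The off-diagonal decay of $\theta$ enters precisely here (the case $j>i^2$ of Lemma~\ref{lem:choose-psi-i}); your generic concave $\psi$ and the subadditivity bound $\Psi_{i+j}-\Psi_i-\Psi_j\le 2i\psi_j$ are too coarse to reproduce it. Finally, once $\sup_{t\le T}\int_\Omega M_\psi\le C(T)$ is in hand, mass conservation is immediate by tightness, $\int_\Omega\sum_{i>N}i\,c_i(t)\le \psi_N^{-1}C(T)\to 0$, so the exact mass conservation of the truncated systems passes to the limit; your elaborate flux decomposition (which again relies on $M_\psi\in L^2(\Omega_T)$) is unnecessary.
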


\begin{rem}[Admissible coagulation coefficients]
  Let us comment on Hypothesis \ref{hyp:aij-almost-linear}. First note
  that \eqref{hyp:aij-almost-linear} includes coefficients of the form
  $$ 
  a_{i,j}
  \le \text{Cst} \, ( i^{\alpha}\, j^{\beta} +  i^{\beta}\, j^{\alpha}) 
  $$
  for any $\alpha, \beta >0$ such that $\alpha + \beta \le 1$ (take
  $\theta(x) =x^{-\var}$ for $\var>0$ small enough). It is also
  satisfied when
  $$ 
  a_{i,j}
  \le \text{Cst} \, \bigg( \frac{i}{\phi(i)} + \frac{j}{\phi(j)} \bigg), 
  $$
  where $x \mapsto \phi(x)$ is any positive strictly increasing function (for $x$
  big enough), which goes to infinity at infinity, and such that $x \mapsto
  \frac{x}{\phi(x)}$ is also increasing (take $\theta(\lambda) =
  \phi(\lambda)^{-1/2}$). All the examples $\phi = \log(1+ \cdot)$,
  $\phi = \log(1+\cdot)\circ\log(1+ \cdot)$, \dots , $\phi =
  \log(1+ \cdot)\circ\dots\circ\log(1+ \cdot)))$ satisfy this
  condition. Likewise, condition (\ref{eq:aij-condition}) also holds when
(for $i,j\ge 2$)
  \begin{equation}\label{sta}
    a_{ij}
    \le
    \text{Cst}\, \left(
      i  \frac{R(\log j)}{\log i} + j  \frac{R(\log i)}{\log j}
    \right)
  \end{equation}
  for some nondecreasing function $R$ such that $x \mapsto R(x)/x$ is nonincreasing and tends to $0$ when $x
  \to +\infty$.
 Note indeed that when (\ref{sta}) holds,
\begin{equation}\label{stasta}
 \frac{a_{ij}}{i+j} \le \frac1{1+ j/i} \, \frac{R[\log(j/i) + \log i]}{\log i} + \frac{j/i}{1 + j/i} \frac{R[\log i]}{\log(j/i) + \log i}.
\end{equation}
Then, condition (\ref{eq:aij-condition}) is obtained by distinguishing the cases $i \ge j/i$ and
$i \le j/i$ in both terms of the right hand side of  (\ref{stasta}).
\par
Assumption (\ref{sta}) can even be replaced by
 $$ a_{ij} \le Cst\, \bigg(i\,\frac{R(\log(\log j))}{\log(\log i)} + {j}\,\frac{R(\log (\log i))}{\log(\log j)} \bigg), $$
with the same requirements on $R$ as previously.
\par
Note however that the linear coefficient $a_{ij} = i+j$ (or the coefficient
 $a_{ij} = \frac{i}{\log i}\, \log j +  \frac{j}{\log j}\, \log i$)
does not satisfy hypothesis (\ref{hyp:aij-almost-linear}), though one would expect that Thm. \ref{thm:mass-conservation}
still holds for such coefficients.
\end{rem}
\bigskip

Before introducing a generalised coagulation-fragmentation model and
thus, a third application of the Propositions \ref{lem:mass-L2} and
\ref{lem:L1-terms}, let us briefly review previous results on
existence theory and mass conservation for the
coagulation-fragmentation system \eqref{eq:cfd}. With some further
restrictions on the coefficients as compared to \cite{LM02}, existence
of solutions by means of $L^\infty$ bounds on the $c_i$ has been
proven in \cite{MR1454671, citeulike:3946307, citeulike:3955138,
  citeulike:3458165, citeulike:3946301}. A different technique was
used in \cite{citeulike:3955119} to prove that equation \eqref{eq:cfd}
is well posed, locally in time, and globally in time when the space
dimension $N$ is one, always assuming that the coagulation and
fragmentation coefficients are bounded.

In a recent work \cite{citeulike:3460338}, Hammond and Rezakhanlou
considered equation \eqref{eq:cfd} without fragmentation, and gave
$L^\infty$ bounds on moments of the solution (and as a consequence,
$L^\infty$ bounds on the $c_i$). This implies uniqueness and mass
conservation for some coagulation coefficients that grow at
most linearly as well as an alternative proof of the existence of
$L^\infty$ solutions by a-priori bounds on the $c_i$; for instance, if $\Omega = \R^N$ and diffusion coefficients $d_i$ are nonincreasing and satisfying \eqref{hyp:diffusion-bounded} and if moreover 
\begin{equation*}
  \sum_{i=1}^\infty i \, c_i^0 \in L^\infty(\R^N),
  \qquad
  \sum_{i=1}^\infty i^2 \, c_i^0 \in L^1(\R^N),
  \qquad
  a_{i,j} \leq C\, (i+j)
\end{equation*}
for some $C > 0$ and all $i,j \geq 1$, then they show that mass is
conserved for all weak solutions of eq. \eqref{eq:cfd} without
fragmentation. See \cite[Theorems 1.3 and 1.4]{citeulike:3460338} and
\cite[Corollary 1.1]{citeulike:3460338} for more details.

In the spatially homogeneous case, mass conservation is known for
general data with finite mass and coagulation coefficients including the critical linear case $a_{i,j} \le
\text{Cst} (i+j)$ (see, for instance,
\cite{citeulike:2972710,citeulike:2972715}).
\bigskip

We finally give a third application of the Propositions
\ref{lem:mass-L2} and \ref{lem:L1-terms}. 
As mentioned already in the Remarks \ref{rem:1.4} and
\ref{rem:1.5}, Propositions \ref{lem:mass-L2} and \ref{lem:L1-terms} (despite true without restrictions on the
coagulation coefficients $a_{i,j}$ for smooth approximating solutions) 
do not really improve the theory of existence of weak solutions for the usual models of coagulation-fragmentation like \eqref{eq:cfd} as the full assumption
\eqref{eq:LM-condition} are needed in passing to the limit in the
approximating solutions. At best they help provide simpler proofs in particular cases, as done in section \ref{sec:existence}.  \medskip

On the other hand, Propositions \ref{lem:mass-L2} and
\ref{lem:L1-terms} are well suited for the existence theory of more
exotic models, for instance, when fragmentation occurs due to binary
collisions between clusters. Then, the break-up terms are quadratic,
being proportional to the concentration of the two clusters which
collide. This leads to coagulation-fragmentation models where all
terms in the right hand side are quadratic.  \medskip

More precisely, we consider that clusters of size $k$ and $l$ collide
with a rate $b_{k,l}\ge 0$, leading to fragmentation. As a
consequence, clusters of size $i< \max\{k,l\}$ are produced, in
average, at a rate $\beta_{i,k,l}\ge 0$ in such a way that the mass is
conserved (that is, $\sum_{i < \max\{k,l\}} i \, \beta_{i,k,l} = k+l$).
This leads to the following system (for $t\in \R_+$, $x\in \Omega$ a bounded
regular open subset of $\R^N$):
\begin{multline}
  \label{cf2}
  \pa_t c_i - d_i \,\Delta_x c_i
  =\ \frac12 \sum_{k+l=i} a_{k,l}\, c_k\,c_l - \sum_{k=1}^{\infty}
  a_{i,k}  \,c_i\, c_k
  \\
  +\frac{1}{2} \sum_{k,l=1}^{\infty} \sum_{i<\max\{k,l\}}
  b_{k,l}\, c_k\,c_l \, \beta_{i,k,l}  
  - \!\sum_{k=1}^{\infty} b_{i,k}\,c_i\,c_k  \qquad (i \in \N^*),
\end{multline}
together with the initial and boundary conditions
\eqref{eq:cfd-boundary}, \eqref{eq:cfd-initial}.  For this model, the
set of assumptions (\ref{eq:hyps0}) is replaced by
\begin{subequations}
  \label{eq:hyps02}
  \begin{align}
    \label{q1}
    &a_{i,j}=a_{j,i} \geq 0,
    &&\quad (i,j \in \N^*),
    \\
    \label{q2}
    &\beta_{i,k,l} = \beta_{i,l,k} \geq 0,
    &&\quad (i,k,l \in \N^*, i < \max\{k,l\}),
    \\
    \label{q2.5}
    &b_{i,k} = b_{k,i} \geq 0, \quad b_{1,1} = 0, 
    &&\quad (i,k \in \N^*, i < k),
    \\
    \label{q3}
    &\sum_{i < \max\{k,l\}} i\,\beta_{i,k,l}=k+l,
    &&\quad (k,l \in \N^*).
  \end{align}
\end{subequations}

Because of the quadratic character of the fragmentation terms, the
inductive method for the proof of existence devised by Lauren\c
cot-Mischler \cite{LM02} seems difficult to adapt in this
case. The method presented in our first application can however be
adapted, provided that the dimension is $N=1$ and that the following
assumptions are made on the coefficients: \bigskip

\begin{hyp}\label{hyp:quadratic}
  Assume \eqref{eq:hyps02}, and suppose that the diffusion
  coefficients are uniformly bounded above and below
  (eq. \eqref{hyp:diffusion-bounded}) and that the initial mass lies
  in $L^2(\Omega)$ (eq. \eqref{hyp:initial-L2}). In place of
  \eqref{eq:LM-condition} we assume further that
\begin{align}\label{nas1}
&\lim_{l \to \infty} \frac{a_{k,l}}l = 0, 
\qquad \lim_{l \to \infty} \frac{b_{k,l}}l = 0,
&&({\hbox{ for fixed }} k \in \N^*),\\
\label{nas2}
&\lim_{l \to \infty} \sup_{k} \left\{\frac{b_{k,l}}{kl} \, \beta_{i,k,l} \right\} = 0. 
&&( {\hbox{ for fixed }} i \in \N^*), 
\end{align}
\end{hyp}
We define a solution to \eqref{cf2} along the same lines as in
Definition \ref{defi}:
\begin{dfn}
  \label{dfn:cf2-solution}
  A global weak solution $c = \{c_i\}_{i \geq 1}$ to  \eqref{cf2}, the boundary condition
  \eqref{eq:cfd-boundary} and the initial data \eqref{eq:cfd-initial} is a sequence of
  functions $c_i: [0,+\infty) \times \Omega \to [0,+\infty)$ such that
  for each $T > 0$,
  \begin{equation}
    c_i \in \mathcal{C}([0,T]; L^1(\Omega)),
    \quad i \geq 1,
  \end{equation}
  the four terms on the r.h.s. of \eqref{cf2} are in
  $L^1([0,T]\times\Omega)$,
  \begin{equation}
    \sup_{t \geq 0} \int_\Omega
    \bigg[ \sum_{i=1}^\infty i c_i(t,x) \bigg]\, dx
    \leq
    \int_\Omega \bigg[  \sum_{i=1}^\infty i c^0_i(x)  \bigg]\, dx,
  \end{equation}
  and for each $i \geq 1$, $c_i$ is a mild solution to the $i$-th
  equation in \eqref{cf2}, that is,
  \begin{equation*}
    c_i(t)
    =
    e^{d_i A_1 t} c_i^0 + \int_0^t e^{d_i A_1(t-s)} Z_i[c(s)] \,ds,
    \quad t \geq 0,
  \end{equation*}
  where $Z_i[c]$ represents the right hand side of \eqref{cf2} and
  $A_1$, $e^{d_i A_1 t}$ are the same as in Definition \ref{defi}.
\end{dfn}

We are now able to prove the following theorem:
\begin{thm}\label{th}
  Under Hypothesis \ref{hyp:quadratic} on the coefficients and initial
  data of the equation, and in dimension $N=1$, there exists a global
  weak solution to eq. (\ref{cf2}) satisfying
  $$
  c_i \in C([0,T], L^1(\Omega)) \cap 
  L^{3-\var}(\Omega_T)\qquad (\text{for all } i\in\N^*, T>0, \var>0),
  $$ 
  for which the four terms appearing in the right hand side of
  (\ref{cf2}) lie in $L^1(\Omega_T)$.
\end{thm}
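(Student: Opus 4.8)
\textbf{Proof plan for Theorem~\ref{th}.}
The plan is to construct the solution by a truncation/approximation argument, mimicking the strategy of the first application (the $N=1$ existence proof in section~\ref{sec:existence}), but tracking the quadratic fragmentation terms carefully. First I would introduce a sequence of truncated systems: replace the infinite sums in \eqref{cf2} by sums up to $n$, keeping only clusters of size $i \leq n$, so that the coagulation term never produces clusters larger than $n$ and the fragmentation sums are finite. For each fixed $n$ the truncated system is a finite system of reaction-diffusion equations with locally Lipschitz, quadratic nonlinearities on the right, so by standard parabolic theory it has a unique nonnegative smooth solution $c^n = \{c_i^n\}_{i \leq n}$ on a maximal interval; nonnegativity follows from the quasi-positivity of the reaction terms, and the mass identity (using \eqref{q3} and $\beta$ vanishing above the colliding sizes) gives $\frac{d}{dt}\int_\Omega \sum_i i\,c_i^n \le 0$, hence $\int_\Omega \rho^n(t) \le \int_\Omega \rho^0$, which prevents blow-up and yields global existence of each $c^n$.

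Next I would establish the key a-priori bounds uniform in $n$. The crucial point is that the duality argument behind Proposition~\ref{lem:mass-L2} applies verbatim to $\rho^n = \sum_{i\le n} i\,c_i^n$: summing the truncated equations against the weight $i$, the coagulation and collisional-fragmentation contributions to $\partial_t \rho^n - \partial_x(d(x,\cdot)\partial_x \rho^n)$ vanish identically (mass is conserved pointwise by both operators in the truncated system), so $\rho^n$ solves a scalar equation of the form $\partial_t \rho^n - \partial_x(a_n(t,x)\partial_x \rho^n)\le 0$ with $d \le a_n \le D$ from \eqref{hyp:diffusion-bounded}; the $L^2$ duality estimate then gives $\|\rho^n\|_{L^2(\Omega_T)} \le (1 + D/d)\,T\,\|\rho^0\|_{L^2(\Omega)}$, uniformly in $n$ (this is exactly Remark~\ref{rem:1.4}). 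From this I would derive, in dimension $N=1$, higher integrability of the individual $c_i^n$: since $N=1$, the parabolic regularising effect of $e^{d_i A_1 t}$ upgrades an $L^2$ bound on the source into an $L^{3-\varepsilon}$ bound on $c_i^n$ (this is the mechanism producing the exponent $3-\varepsilon$ in the statement), and the bound on $\rho^n$ in $L^2$ controls the quadratic coagulation source $Q_i^{\pm}$ and the collisional terms in $L^1$ via the analogue of Proposition~\ref{lem:L1-terms}; an induction on $i$ (or a direct bootstrap, since only $\rho^n$ and lower modes enter) then gives $\|c_i^n\|_{L^{3-\varepsilon}(\Omega_T)} \le C(i,T,\varepsilon)$ uniformly in $n$, together with $L^1(\Omega_T)$ bounds on all four right-hand-side terms for each fixed $i$.

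With these uniform bounds in hand, I would pass to the limit $n\to\infty$. Up to a subsequence, each $c_i^n$ converges weakly in $L^{3-\varepsilon}(\Omega_T)$ and, using the mild formulation together with compactness of $e^{d_iA_1 t}$ and equicontinuity in $t$ with values in $L^1(\Omega)$, strongly in $\mathcal C([0,T];L^1(\Omega))$ (an Aubin--Lions / Arzel\`a--Ascoli type argument, exactly as in the $N=1$ application). The strong $L^1$ convergence of each mode plus the uniform higher-integrability and the tail control provided by \eqref{nas1}--\eqref{nas2} let me pass to the limit in the bilinear coagulation and collisional-fragmentation sums: the smallness assumption \eqref{nas2} is what makes the fragmentation gain term $\sum_{k,l} b_{k,l} c_k^n c_l^n \beta_{i,k,l}$ equi-integrable in the large-cluster variables, so no mass escapes to $i=\infty$ in the source, and Vitali's theorem upgrades weak to strong $L^1$ convergence of each right-hand-side term. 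The resulting limit $c=\{c_i\}$ satisfies the mild formulation of \eqref{cf2}, has the claimed regularity (by weak lower semicontinuity of the $L^{3-\varepsilon}$ norm and Fatou for the mass-monotonicity inequality), and the four terms lie in $L^1(\Omega_T)$; this is a weak solution in the sense of Definition~\ref{dfn:cf2-solution}.

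The main obstacle is the limit passage in the \emph{quadratic} fragmentation term, and more precisely obtaining its equi-integrability uniformly in $n$: unlike the classical linear fragmentation of \cite{LM02}, here the gain term involves the product $c_k^n c_l^n$ summed over all large $k,l$, so one cannot run the Lauren\c cot--Mischler induction on $i$ to bound $F_i^+$. The resolution is precisely assumption \eqref{nas2} together with the $L^2(\Omega_T)$ bound on $\rho^n$ (which bounds $\sum_{k,l} (k+l)\, b_{k,l}/(kl)\cdot(\text{stuff})$ after using $b_{k,l}\le (kl)\cdot(\text{small}))$; carrying this estimate through, splitting the sum at a large threshold and using $\sum_{i<\max\{k,l\}} i\,\beta_{i,k,l}=k+l$ to control the $i$-sum, is the technical heart of the proof. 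The restriction to $N=1$ enters only to secure the $L^{3-\varepsilon}$ regularity of the $c_i$ (needed so that products $c_k c_l$ make sense and are integrable) via the one-dimensional Sobolev/parabolic embedding; in higher dimensions this gain is weaker and the argument as stated breaks down.
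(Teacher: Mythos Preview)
Your plan is correct and follows essentially the same route as the paper: truncate, get $\rho^n \in L^2(\Omega_T)$ uniformly via the duality estimate, bound all four right-hand-side terms in $L^1(\Omega_T)$ (the collisional gain term via \eqref{nas2}, giving $b_{k,l}\beta_{i,k,l}\le C_i\,kl$ and hence a bound by $C_i\rho^2$; the loss terms by integrating the equation), use one-dimensional heat regularisation to put $c_i^n$ in a compact set of $L^{3-\varepsilon}(\Omega_T)$, and pass to the limit by splitting the sums at a cutoff $K$ and using \eqref{nas1}--\eqref{nas2} for the tails. Two small slips to fix when you write it up: the mass equation has the form $\partial_t\rho^n-\partial_{xx}(M^n\rho^n)\le 0$ (not divergence form $\partial_x(a_n\partial_x\rho^n)$), which is precisely what the duality lemma needs; and the $L^{3-\varepsilon}$ gain in $N=1$ comes from an $L^1$ bound on the source (the right-hand side is only in $L^1$, being controlled by $\rho^2$), not $L^2$ --- no induction on $i$ is required, since every term is dominated directly by a constant times $\rho^2$.
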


\begin{rem}
  The method of proof unfortunately does not seem to provide existence
  in dimensions $N \geq 2$. Dimension $N=2$ looks in fact critical as
  it doesn't allow a-priori a bootstrap in the heat equation with
  right hand side in $L^1$. A possible line of proof could follow
  \cite{GV} in the context of reaction-diffusion equations.  In higher
  dimensions $N\ge 3$, assuming additionally a detailed balance
  relation between coagulation and fragmentation, an entropy based
  duality method as in \cite{citeulike:3798030} could be used to
  define global weak $L^2$ solutions (see also
  \cite{citeulike:3973601}).
\end{rem}
\bigskip

Our paper is built in the following way: Section \ref{nae} is devoted
to the proof of Propositions \ref{lem:mass-L2} and \ref{lem:L1-terms}.
Then Sections \ref{sec:existence}, \ref{sec:mass}, and
\ref{sec:quadratic} are each devoted to one of the three applications.
In particular, Theorem \ref{thm:mass-conservation} is proven in Section 4 first in a
particular case (with a very short proof), and then in complete
generality. Theorem \ref{th} is proven in Section 5.  Finally, an
Appendix is devoted to the proof of a Lemma of duality due to
M. Pierre and D. Schmitt (cf. \cite{PSch}), which is the key to
Proposition \ref{lem:mass-L2}.

\section{A new a priori estimate}\label{nae}
The solutions given in \cite{LM02} are constructed by approximating
the system (\ref{eq:cfd})--(\ref{eq:defQF-3}) by a truncated system
(the procedure consists in setting the coagulation and fragmentation
coefficients to zero beyond a given finite size, and  smoothing the
initial data) for which very regular solutions exist.  Then, uniform
estimates for the solutions of this approximate system are
proven. Finally, it is shown that these solutions have a subsequence
which converges to a solution to the original system. In the proofs
below it must be understood that the bounds are obtained for the
truncated system (in a uniform way) and then transfered to the weak
solution by a passage to the limit: the fact that this transfer can be
done (in the case of the total mass) without replacing the equality by
an inequality is the heart of our second application.  \medskip

We begin with the

\begin{proof}[Proof of Proposition \ref{lem:mass-L2}]
Using the fact that
\begin{equation*}
 \partial_t \rho - \Delta ( M \rho ) = 0,\qquad
    \inf_{i\in\N^*} \{d_i\}\le M(t,x) := 
\frac{\sum_{i=1}^\infty d_i\, i\, c_i}{\sum_{i=1}^\infty i\, c_i}
\le \sup_{i\in\N^*} \{d_i\},
\end{equation*}
we can deduce thanks to a Lemma of duality   (\cite[Appendix]{citeulike:3798030}) that $\rho \in L^2(\Omega_T)$, and more precisely
that 
$$ 
\|\rho\|_{L^2(\Omega_T)} \le \bigg( 1 + \frac{\sup_i \{d_i\}}{\inf_i \{d_i\}} \bigg)\, T\, \|\rho(0,\cdot)\|_{L^2(\Omega)},
$$ 
for all $T>0$. For the sake of completeness, the Lemma is recalled with its proof
 in the Appendix (Lemma  \ref{lem:nl-diffusion-L2-estimate}).
\end{proof}
\medskip

We now turn to the
\begin{proof}[Proof of Proposition \ref{lem:L1-terms}]
For $F_i^-$, it is clear that
  \begin{equation*}
    F_i^- \leq B_i \, \rho
    \in L^2([0,T] \times \Omega)
    \subseteq L^1([0,T] \times \Omega),
  \end{equation*}
  thanks to Proposition \ref{lem:mass-L2}. For $F_i^+$ we use
  eq. \eqref{bou} to write
  \begin{equation}
    \label{eq:F+bound}
    F_i^+
    \leq
    \sum_{j=1}^\infty
    \left( \frac{B_{i+j}\, \beta_{i+j,i}}{i+j} \right)
    (i+j)\, c_{i+j}
    \leq
    K_i \sum_{j=1}^\infty (i+j)\, c_{i+j}
    \leq
    K_i \,\rho,
  \end{equation}
  which is again in $L^2([0,T] \times \Omega)$, and hence in
  $L^1([0,T] \times \Omega)$.

  For the coagulation terms, we have, since each $c_i$ is less than
  $\rho$,
  \begin{equation}
    \label{eq:Q+bound}
    Q_i^+
    \leq
    \frac{1}{4} \sum_{j=1}^{i-1} a_{i-j,j}
    \left(c_{i-j}^2 + c_j^2 \right)
    \leq
    \frac{1}{2} \rho^2 \left( \sum_{j=1}^{i-1} a_{i-j,j} \right), 
  \end{equation}
  which is in $L^1([0,T] \times \Omega)$ as $\rho^2$ is, and the sum
  only has a finite number of terms. Finally, for $Q_i^-$ we use the
  fact that $Q_i^+$ and $F_i^+$ are already known to be integrable: Thus, from
  eq. \eqref{eq:cfd} integrated over $[0,T] \times \Omega$,
  \begin{multline*}
    \int_\Omega c_i(T,x) \,dx
    + \int_0^T\!\! \int_\Omega Q_i^-(t,x) \,dx \,dt
    \\
    \leq
    \int_\Omega c_i^0(x) \,dx
    +
    \int_0^T\!\! \int_\Omega Q_i^+(t,x) \,dx \,dt
    +
    \int_0^T\!\! \int_\Omega F_i^+(t,x) \,dx \,dt.
  \end{multline*}
  This proves our result.
\end{proof}

\section{First application: a simplified proof of existence of solutions in dimension 1}
\label{sec:existence}

We begin this section with the following corollary of Proposition
\ref{lem:L1-terms}, in the particular case of dimension $N=1$.

\begin{lem}
  \label{lem:Linfty-bound}
  Assume that the dimension $N = 1$, and that
(\ref{eq:hyps0}), (\ref{hyp:diffusion-bounded}), (\ref{hyp:initial-L2})  and (\ref{bou}) hold.
Then, for all $T \geq 0$, $i \in \N^*$ the concentrations  $c_i \in
  L^\infty([0,T] \times \Omega)$ (where $c_i$ are smooth
solutions  of a truncated version of (\ref{eq:cfd}) -- (\ref{eq:defQF-3}), the $L^\infty$ norm
being independent of the truncation).
\end{lem}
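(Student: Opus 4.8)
The plan is to exploit the one-dimensional Sobolev/parabolic smoothing together with the $L^1$ bounds from Proposition~\ref{lem:L1-terms} and the $L^2$ bound on $\rho$ from Proposition~\ref{lem:mass-L2}. I work throughout with a fixed truncation level, proving bounds that do not depend on it; recall (Remark~\ref{rem:1.5}) that in this truncated smooth setting the hypothesis \eqref{bou} is all that is needed for the $L^1$ estimates. First I would rewrite the $i$-th equation in mild form,
\[
c_i(t) = e^{d_i A_1 t} c_i^0 + \int_0^t e^{d_i A_1 (t-s)}\bigl( Q_i[c(s)] + F_i[c(s)] \bigr)\,ds,
\]
and observe that by Proposition~\ref{lem:L1-terms} the source term $Q_i + F_i = Q_i^+ - Q_i^- + F_i^+ - F_i^-$ is bounded in $L^1(\Omega_T)$ uniformly in the truncation (with a bound depending only on the coefficients through $K_i$, the $a$'s up to size $i$, the $B_i$'s, $d$, $D$ and $\|\rho(0,\cdot)\|_{L^2}$). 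So the task reduces to a regularity statement for the scalar heat equation $\partial_t u - d_i\Delta u = g$ on $[0,T]\times\Omega$ with homogeneous Neumann data, $u(0)=c_i^0$, and $g \in L^1(\Omega_T)$, $u\ge 0$.

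Next I would bootstrap. In dimension $N=1$ the heat semigroup gains essentially one full derivative in the relevant Sobolev scale, so a source in $L^1_t L^1_x$ produces a solution in $L^q(\Omega_T)$ for every $q < 3$ (this is the standard maximal-regularity/embedding chain: $L^1$ data $\Rightarrow$ $u \in L^{3-\varepsilon}$ when $N=1$, exactly the exponent that also appears in Theorem~\ref{th}). That is not yet $L^\infty$. To close the argument I would use the structure of the nonlinearity: all of $Q_i^+$, $F_i^+$, $F_i^-$ are pointwise bounded by $\mathrm{Cst}_i\,\rho$ or $\mathrm{Cst}_i\,\rho^2$ (eqs.~\eqref{eq:F+bound}, \eqref{eq:Q+bound} and $F_i^-\le B_i\rho$), and $Q_i^-\ge 0$ only helps (drop it by nonnegativity of $c_i$ and monotonicity of the semigroup). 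Since $\rho\in L^2(\Omega_T)$ by Proposition~\ref{lem:mass-L2}, the ``good'' part of the source is in $L^1_t L^1_x$ but the term $\rho^2$ is only in $L^1(\Omega_T)$; however, one can do better by iterating: once $c_i$ is known in $L^{3-\varepsilon}$, feed this back — but the cleanest route in $N=1$ is to note that $\rho \in L^2(\Omega_T)$ already gives, via the heat equation $\partial_t\rho - \Delta(M\rho)=0$ and a further duality/regularity step, that $\rho$ (hence each $c_i \le \rho$ is false — $c_i$ is not bounded by $\rho$ pointwise in the needed norm directly, but) the sources $Q_i^+ \lesssim \rho^2$, $F_i^\pm \lesssim \rho$ are controlled: $\rho^2 \in L^1$, $\rho \in L^2$. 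Then apply the scalar parabolic regularity: with $N=1$, a right-hand side bounded by $\mathrm{Cst}(\rho + \rho^2)$ with $\rho\in L^2(\Omega_T)$ gives $c_i \in L^\infty([0,T]\times\Omega)$ provided $c_i^0\in L^\infty$ — and $c_i^0$ is smooth and bounded after the truncation/mollification. One uses here that the heat kernel in 1D maps $L^1(\Omega_T)\to C([0,T]\times\bar\Omega)$ is \emph{false}, so the precise statement needed is: $g\in L^p(\Omega_T)$ with $p>(N+2)/2 = 3/2$ forces $u\in L^\infty$; since $\rho^2\in L^1$ is not enough, one first improves $\rho$ to $L^{p}$ for some $p>3/2$, which in $N=1$ follows by applying the same duality lemma (Lemma~\ref{lem:nl-diffusion-L2-estimate}) in $L^p$ rather than $L^2$, or by a short bootstrap: $\rho\in L^2\Rightarrow$ each $c_i\in L^{4-\varepsilon}$ (source $\lesssim\rho^2\in L^2$... no, $\rho^2 \in L^1$) — so the genuinely clean statement is $\rho \in L^2 \Rightarrow \rho^2 \in L^1 \Rightarrow c_i \in L^{3-\varepsilon}$, then re-examine $Q_i^+ \lesssim \sum a\, c_j^2$ with $c_j \in L^{3-\varepsilon}$ so $c_j^2 \in L^{3/2-\varepsilon}$, and $3/2 > (N+2)/2$ fails by a hair when $N=1$ since $(N+2)/2 = 3/2$ — this is exactly the critical-exponent subtlety, so one more iteration (using $\rho\in L^2$ to upgrade $c_j$ slightly past $L^3$) crosses the threshold and yields $c_i\in L^\infty$.

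The main obstacle is precisely this endpoint issue: the quadratic coagulation gain term, estimated crudely by $\rho^2$, sits right at the borderline of what 1D parabolic regularity can absorb, so a naive single application of smoothing lands in $L^{3-\varepsilon}$, not $L^\infty$. The fix is a finite bootstrap loop that alternates (i) the $L^2$ control of $\rho$ from Proposition~\ref{lem:mass-L2} with (ii) the pointwise domination $c_i\le\rho$ together with the refined bounds $Q_i^+\lesssim \rho^2$, $F_i^\pm\lesssim\rho$, each pass raising the integrability exponent of the $c_i$ by a fixed amount, until after finitely many steps the source of the $i$-th heat equation lies in $L^p(\Omega_T)$ with $p>(N+2)/2=3/2$, whence $c_i\in L^\infty([0,T]\times\Omega)$ by the standard maximal regularity and Sobolev embedding for the Neumann heat equation; crucially every constant produced depends only on the data and coefficients listed in Proposition~\ref{lem:L1-terms}, not on the truncation, because $\|\rho(0,\cdot)\|_{L^2}$, $K_i$, $\sum_{j<i}a_{i-j,j}$ and $B_i$ are all truncation-independent.
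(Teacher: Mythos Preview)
Your approach --- parabolic bootstrap in $N=1$, using Proposition~\ref{lem:L1-terms} for the first step and the finite-sum bound $Q_i^+ \le \tfrac{1}{4}\sum_{j<i} a_{i-j,j}(c_{i-j}^2+c_j^2)$ from \eqref{eq:Q+bound} for subsequent steps --- is exactly the paper's. Where your write-up goes astray is at the ``endpoint'' step: you do not need to improve $\rho$ beyond $L^2$, and the mechanism you suggest for doing so is not available. The point you are missing is that a source in $L^{3/2-\delta}(\Omega_T)$ for \emph{every} $\delta>0$ already yields $c_i \in L^q(\Omega_T)$ for \emph{every} $q<\infty$: the parabolic smoothing exponent in $N=1$ is $2/(N+2)=2/3$, so $1/q = 1/p - 2/3 \to 0^+$ as $p\uparrow 3/2$. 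Once $c_j \in L^p$ for all $p<\infty$ and all $j<i$, the same finite sum gives $Q_i^+ \in L^p$ for all $p<\infty$; combined with $F_i^+ \in L^2$ from \eqref{eq:F+bound} (which is now the limiting term), one has $(\partial_t - d_i\Delta)c_i \le h_i \in L^2(\Omega_T)$, and since $2>3/2=(N+2)/2$ this last pass gives $c_i \in L^\infty$. So the bootstrap closes cleanly in three steps ($L^1 \to L^{3-\delta} \to L^p$ for all $p \to L^\infty$), each with constants depending only on $K_i$, $B_i$, $\sum_{j<i} a_{i-j,j}$, $d$, $D$, $T$ and $\|\rho(0,\cdot)\|_{L^2}$, hence uniform in the truncation.

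Two side remarks: the inequality $c_i \le \rho$ that you doubt mid-argument is in fact true (since $\rho = \sum_j j c_j \ge i c_i \ge c_i$), though it is not needed here; and your proposal in the final paragraph to ``alternate'' with improved $\rho$-estimates is both unnecessary and unworkable --- the bootstrap feeds only on the regularity of the $c_j$ themselves through the finite sum in $Q_i^+$, and $\rho$ never needs to leave $L^2$.
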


\begin{proof}[Proof of Lemma \ref{lem:Linfty-bound}]
  We carry out a bootstrap regularity argument. Thanks to Proposition
  \ref{lem:L1-terms}, we know that (for all $i \in \N^*$)
  \begin{equation*}
    \left( \partial_t -
      d_i \Delta \right) c_i \in L^1([0,T] \times \Omega).
  \end{equation*}
  Using for example the results in \cite{citeulike:3797975}, this implies that for
  any $\delta > 0$,
  \begin{equation}
    \label{eq:ci-L3-}
    c_i \in L^{3-\delta}([0,T] \times \Omega)
    \qquad (i \in \N^*).
  \end{equation}
  Now, eq. \eqref{eq:ci-L3-} shows that $Q_i^+$ is actually more
  regular: from (the first inequality in) \eqref{eq:Q+bound},
  \begin{equation}
    \label{eq:Qi+L3/2-}
    Q_i^+ \in L^{\frac{3}{2} - \frac{\delta}{2}} ([0,T] \times \Omega)
    \qquad
    \text{ for all }
    \delta > 0, i \in \N^*.
  \end{equation}
  In addition, we already knew from eq. \eqref{eq:F+bound} that (for all $i \in \N^*$)
  \begin{equation}
    \label{eq:F+L2}
    F_i^+
    \in L^2([0,T] \times \Omega),
  \end{equation}
  [for which we do not need to assume that the space dimension $N$ is
  $1$]. Consequently, omitting the negative terms (for all $i \in \N^*$, $\delta>0$), we can find $h_i$ such that
  \begin{equation*}
    \left( \partial_t -
      d_i \Delta \right) c_i
    \leq h_i \in L^{\frac{3}{2} - \frac{\delta}{2}} ([0,T] \times \Omega).
  \end{equation*}
  As the $c_i$ are positive, this implies that
  \begin{equation*}
    c_i \in L^p([0,T] \times \Omega)
    \quad \text{ for all } p \in [1, +\infty[, i \in \N^*.
  \end{equation*}
  Again from \eqref{eq:Q+bound},
  \begin{equation*}
    Q_i^+ \in L^{p} ([0,T] \times \Omega)
    \quad \text{ for all } p \in [1, +\infty[, i \in \N^*.
  \end{equation*}
  From this and \eqref{eq:F+L2}, we can find $h_i$ such that
  \begin{equation*}
    \left( \partial_t -
      d_i \Delta \right) c_i
    \leq h_i \in L^2 ([0,T] \times \Omega),
  \end{equation*}
  which implies in turn that $c_i \in L^\infty([0,T] \times \Omega)$ (for all $i \in \N^*$).
\end{proof}
\medskip

We now have the possibility to give a short proof of Theorem
\ref{thm:LM-existence} in dimension $1$ (and under the extra
assumptions \eqref{hyp:diffusion-bounded},
(\ref{hyp:initial-L2})). Recall that a proof for any dimension can be
found in \cite{LM02}.  \medskip

\begin{proof}[Short proof of Theorem \ref{thm:LM-existence} 
in 1D under the assumptions \eqref{hyp:diffusion-bounded} and \eqref{hyp:initial-L2}]\ \\
Consider a sequence $c_i^M$ of (regular) solutions to
a truncated version of system \eqref{eq:cfd} -- (\ref{eq:defQF-3}).
Thanks to Proposition \ref{lem:Linfty-bound}, we know that for each $i\in \N^*$, $ \sup_M \norm{c_i^M}_{L^\infty(\Omega_T)} < + \infty$.
Then (for each  $i\in \N^*$) there
is a subsequence of the $(c_i^M)_{M\in\N}$ (which we still denote by $(c_i^M)_{M\in\N}$),
and a function $c_i \in L^\infty(\Omega_T)$, such that
\begin{equation}
  \label{eq:weak-star-conv}
  c_i^M \overset{*}{\rightharpoonup} c_i
  \quad \text{weak-$*$ in } L^\infty (\Omega_T).
\end{equation}
Using Proposition \ref{lem:L1-terms}, we also see that (for
any fixed $i\in \N^*$), the $L^1(\Omega_T)$ norms of $C_i^{+,M}$,
$C_i^{-,M}$, $F_i^{+,M}$, $F_i^{-,M}$ (the coagulation and
fragmentation terms associated to $\{c_i^M\}$) are bounded
independently of $M$. Using eq. \eqref{eq:cfd-eq} and the properties of
the heat equation, one sees that for each $i\in \N^*$, the sequence
$\{c_i^M\}$ lies in a strongly compact subset of $L^1(\Omega_T)$. Hence, by
renaming our subsequence again, we may assume that
\begin{equation}
  \label{eq:L1-conv}
  c_i^M \to c_i
  \quad \text{ in } L^1 (\Omega_T)  \text{ strong },
  \text{ for all } i \in \N^*.
\end{equation}
In order to prove that $\{c_i\}$ is indeed a solution to
eq. \eqref{eq:cfd} -- \eqref{eq:defQF-3}, let us prove that all terms $F_i^{+,M}$,
$F_i^{-,M}$, $C_i^{+,M}$, $C_i^{-,M}$ converge to the corresponding
expressions for $c_i$, which we denote by $F_i^{+}$,
$F_i^{-}$, $C_i^{+}$, $C_i^{-}$, as usual.

\begin{enumerate}
\item Positive fragmentation term: for each fixed $i$, the sum
  \begin{equation*}
    F_i^{+,M} = \sum_{j=1}^\infty B_{i+j}\, \beta_{i+j,i}\, c^M_{i+j}
  \end{equation*}
  converges to $F_i^+$ in $L^1(\Omega_T)$ because the tails of the sum
  converge to 0 uniformly in $M$ (this is due to hypothesis
  \eqref{eq:LM-condition}):
\begin{align*} 
\int_0^T\!\!\int_{\Omega} \bigg| \sum_j B_{i+j} \, \beta_{i+j,i} (c_{i+j}^M - c_{i+j}) \bigg|\, dx dt
\le& 2 \left( \sup_{j \ge J_0} \left| \frac{ B_{i+j} \, \beta_{i+j,i}}{i+j} \right| \right)\,\rho\\
&+  \sup_{j \le J_0} \| c_{i+j}^M - c_{i+j} \|_{L^1(\Omega_T)} . 
\end{align*}
\item The negative fragmentation term is just a multiple of $c_i^M$,
  so the convergence in $L^1(\Omega_T)$ is given by
  \eqref{eq:L1-conv}.
\item For each fixed $i$, the positive coagulation term is a finite
  sum of terms of the form $a_{i,j} c_i^M c_j^M$. Thanks to
  \eqref{eq:weak-star-conv} and \eqref{eq:L1-conv}, this converges to
  $a_{i,j} c_i c_j$ in $L^1(\Omega_T)$.
\item The negative coagulation term is
  \begin{equation*}
    Q_i^{-,M} = c_i^M \sum_{j=1}^\infty a_{i,j}\, c_j^M.
  \end{equation*}
 Since $c_i^M$ converges to $c_i$ weak-$*$ in $L^\infty(\Omega_T)$, it
  is enough to prove that $\sum_{j=1}^\infty a_{i,j}\, c_j^M$ converges to
  $\sum_{j=1}^\infty a_{i,j}\, c_j$ strongly in $L^1(\Omega_T)$. Observing that
$$ \int_0^T\!\!\int_{\Omega} \bigg| \sum_j a_{i,j} (c_{j}^M - c_{j}) \bigg|\, dx dt
\le 2 \left( \sup_{j \ge J_0} \left| \frac{ a_{i,j}}{j} \right| \right)\,\rho
+  \sup_{j \le J_0} \| c_{j}^M - c_{j} \|_{L^1(\Omega_T)} , $$
we see thanks to (\ref{eq:LM-condition}) and (\ref{eq:L1-conv}) that this 
convergence indeed holds.
\end{enumerate}
\end{proof}

\section{Second application: mass conservation}
\label{sec:mass}

We begin this section with a very short proof of Theorem \ref{thm:mass-conservation} in a
particular case in order to show how estimate \eqref{imp} works. More
precisely, we consider the pure coagulation case with $a_{i,j} =
\sqrt{i\,j}$ and $B_i=0$ (no fragmentation), and with initial data
satisfy additionally $\sum_{i=0}^{\infty} i\, \log i \, c_i(0,x) \, dx
< +\infty$ (which is sightly more stringent than only assuming finite
initial mass).  \medskip

Then, using the weak formulation \eqref{eq:fundamental-identity} with $\varphi_i=\log(i)$ (and remembering that $\log(1 + x) \le \text{Cst}\, \sqrt{x}$)
\begin{align}
  \frac{d}{dt} \int_{\Omega}\sum_{i=1}^{\infty} i\, \log i \, c_i \, dx 
  &= \int_{\Omega} \sum_{i=1}^{\infty} \sum_{j=1}^{\infty}
  \sqrt{ij} \, c_i\, c_j
  \left( i\, \log (1+\frac{j}{i}) + j\,\log (1+\frac{i}{j})\right) dx  \nonumber\\
  &\le 2 \int_{\Omega} \sum_{i=1}^{\infty} \sum_{j=1}^{\infty} i\,j\, c_i\, c_j\, dx
  \le 2  \int_{\Omega} \rho(t,x)^2\, dx .
  \label{eq:mc-simpleproof}
\end{align}
As a consequence, we have for all $T>0$
\begin{equation*}
  \int_{\Omega} \sum_{i=0}^{\infty} i\, \log i \, c_i(T,x) \, dx
  \le \int_{\Omega} \sum_{i=0}^{\infty} i\, \log i \, c_i(0,x) \, dx
  +  2  \int_0^T\!\!\int_{\Omega} \rho(t,x)^2\, dxdt, 
\end{equation*}
which ensures the propagation of the moment $\int \sum_{i=0}^{\infty}
i\, \log i \, c_i(\cdot,x) dx$, and therefore gives a rigorous proof of
conservation of the mass for weak solutions of the system: no gelation
occurs.

Our general result is obtained through a refinement of this argument
under hypothesis \eqref{hyp:aij-almost-linear}. Before giving the
proof of Theorem \ref{thm:mass-conservation} we need two technical lemmas, which will
substitute the intermediate step in \eqref{eq:mc-simpleproof}.

\begin{lem}
  \label{lem:psi-slowly-growing}
  Let $\{\mu_i\}_{i \geq 1}$ and $\{\nu_i\}_{i \geq 1}$ be sequences
  of positive numbers such that $\{\mu_i\}$ is bounded,
  \begin{equation*}
    \sum_{i=1}^\infty \mu_i = +\infty
    \quad \text{ and } \quad
    \lim_{i \to +\infty} \nu_i = +\infty.
  \end{equation*}
  Then we can find a sequence $\{\xi_i\}_{i \geq 1}$ of nonnegative
  numbers such that
  \begin{gather*}
    \sum_{i=1}^\infty \xi_i = +\infty,
    \\
    \xi_i \leq \mu_i
    \quad \text{ and } \quad
    \psi_i := \sum_{j=1}^i \xi_j \leq \nu_i
    \quad \text{ for all } i \geq 1.
  \end{gather*}
\end{lem}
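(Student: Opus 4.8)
The plan is to build the partial sums $\psi_i$ directly, by a greedy recursion that is capped by $\nu_i$, and then recover $\xi_i$ as the increments $\psi_i-\psi_{i-1}$. As a preliminary reduction, I would first replace $\{\nu_i\}$ by its running infimum $\tilde\nu_i := \inf_{k\ge i}\nu_k$. Since $\nu_k\to+\infty$, each such infimum is attained (only finitely many terms lie below any bound), hence positive; the sequence $\{\tilde\nu_i\}$ is nondecreasing, satisfies $\tilde\nu_i\le\nu_i$, and still tends to $+\infty$. Because any sequence whose partial sums stay below $\tilde\nu_i$ automatically satisfies $\psi_i\le\nu_i$, it is harmless to assume from now on that $\{\nu_i\}$ itself is nondecreasing and tends to $+\infty$.

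Next I would set $\psi_0:=0$ and define recursively $\psi_i:=\min\{\psi_{i-1}+\mu_i,\ \nu_i\}$ for $i\ge1$, and put $\xi_i:=\psi_i-\psi_{i-1}$. Three of the four required conclusions are then immediate. By induction $\psi_{i-1}\le\nu_{i-1}\le\nu_i$, so both entries of the minimum defining $\psi_i$ are $\ge\psi_{i-1}$; hence $\psi_i\ge\psi_{i-1}$, i.e. $\xi_i\ge0$. Comparison with the first entry gives $\xi_i=\psi_i-\psi_{i-1}\le\mu_i$. And $\psi_i\le\nu_i$ holds by construction, so $\sum_{j=1}^i\xi_j=\psi_i\le\nu_i$ for every $i$.

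The only substantial point — and the step I expect to be the main obstacle — is the divergence $\sum_i\xi_i=\lim_i\psi_i=+\infty$. I would argue by contradiction: the sequence $\{\psi_i\}$ is nondecreasing, so if it failed to diverge it would converge to a finite limit $L$, and in particular $\psi_i\le L$ for all $i$. Choose $N$ with $\nu_i>L+1$ whenever $i\ge N$. For such $i$ the second entry $\nu_i$ cannot be the one selected by the minimum, for otherwise $\psi_i=\nu_i>L\ge\psi_i$; hence $\psi_i=\psi_{i-1}+\mu_i$ for every $i\ge N$, and therefore $\psi_i=\psi_{N-1}+\sum_{j=N}^i\mu_j$. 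Since $\sum_j\mu_j=+\infty$, the right-hand side tends to $+\infty$ as $i\to\infty$, contradicting $\psi_i\le L$. Thus $\psi_i\to+\infty$, and $\{\xi_i\}$ has all the required properties. I note that the boundedness of $\{\mu_i\}$ is not actually used in this argument.
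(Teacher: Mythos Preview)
Your proof is correct and close in spirit to the paper's, but the construction is genuinely different. The paper defines $\xi_i$ directly by an ``all-or-nothing'' rule: set $\xi_i:=\mu_i$ if $\mu_i+\sum_{j<i}\xi_j\le\nu_i$, and $\xi_i:=0$ otherwise. Your construction instead builds the partial sums greedily via $\psi_i:=\min\{\psi_{i-1}+\mu_i,\nu_i\}$, so $\xi_i$ can take any value in $[0,\mu_i]$. Both arguments reduce to nondecreasing $\nu_i$ in the same way and finish with the same type of contradiction. The payoff of your variant is exactly what you observed at the end: it does not use the boundedness of $\{\mu_i\}$. The paper's binary rule does need it --- without a uniform bound $M$ on the $\mu_i$, one cannot guarantee that the inclusion condition $\mu_i+\sum_{j<i}\xi_j\le\nu_i$ is eventually always satisfied (e.g.\ try $\mu_i=\nu_i=i$: the paper's rule gives $\xi_1=1$ and $\xi_i=0$ for all $i\ge2$). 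So your approach proves a slightly stronger lemma with essentially the same effort.
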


\begin{proof}
  We may assume that $\nu_i$ is nondecreasing, for otherwise we can
  consider $\tilde{\nu}_i := \inf_{j \geq i} \{ \nu_j \}$ instead of
  $\nu_i$. Then, in order to find $\xi_i$ it is enough to define
  recursively $\xi_0 := 0$ and, for $i \geq 1$,
  \begin{equation*}
    \xi_i :=
    \begin{cases}
      \mu_i  & \text{ if } \mu_i + \sum_{j=0}^{i-1} \xi_j \leq \nu_i,
      \\
      0      & \text{ otherwise}.
    \end{cases}
  \end{equation*}
  By construction, $\xi_i \leq \mu_i$ for all $i \geq 1$, and also
  $\sum_{j=1}^i \xi_j \leq \nu_i$ for $i \geq 1$, as we are assuming
  $\{\nu_i\}$ nondecreasing.

  To see that $\{\xi_i\}$ cannot be summable, suppose otherwise that
  $\sum_{i=1}^\infty \xi_i = S < +\infty$. Take a bound $M > 0$ of
  $\{\mu_i\}$, and choose an integer $k$ such that $\nu_i \geq S + M$
  for all $i \geq k$. Then, by definition,
  \begin{equation*}
    \xi_i = \mu_i
    \quad \text{ for all } i \geq k,
  \end{equation*}
  which implies that $\{\xi_i\}$ is not summable, as $\{\mu_i\}$ is
  not, and gives a contradiction.
\end{proof}


\begin{lem}
  \label{lem:choose-psi-i}
  Assume \eqref{eq:aij-condition}. There is a nondecreasing sequence
  of positive numbers $\{\psi_i\}_{i \geq 1}$ such that $\psi_i \to
  +\infty$ when $i \to +\infty$, and
  \begin{equation}
    \label{eq:less-than-ij}
    a_{i,j} (\psi_{i+j} - \psi_i)
    \leq
    C j
    \quad
    \text{ for all }
    i,j \geq 1,
  \end{equation}
  for some constant $C > 0$.

  In addition, for a given sequence of positive numbers $\lambda_i$ with $\lim_{i \to +\infty} \lambda_i = +\infty$, we can choose
  $\psi_i$ so that $\psi_i \leq \lambda_i$ for all $i$.
\end{lem}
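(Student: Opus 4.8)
The plan is to look for $\psi_i$ of the special form $\psi_i := g(\log i)$, where $g \colon [0,+\infty) \to (0,+\infty)$ is a concave, increasing function with $g(s) \to +\infty$ as $s \to +\infty$ and with finite right derivative $g'(0)$ at the origin. Any such $g$ automatically produces a positive, nondecreasing, divergent sequence $\{\psi_i\}$, so the whole point is to arrange the inequality \eqref{eq:less-than-ij} together with the optional upper bound $\psi_i \le \lambda_i$, both of which I will encode as pointwise bounds on $g$.

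First I would reduce the verification of \eqref{eq:less-than-ij} to one inequality on $g$. It is harmless to replace $\theta$ by $\Theta(x) := \sup_{y \ge x} \theta(y)$, which is nonincreasing, bounded by $\theta_0 := \sup \theta < +\infty$, still tends to $0$ at $+\infty$, and still satisfies $a_{i,j} \le (i+j)\,\Theta(\max\{j/i,\, i/j\})$ for all $i,j\ge1$. For $j \ge i$, concavity of $g$ together with $g \ge 0$ gives the subadditivity $g(a+b) \le g(a) + g(b)$ for $a,b \ge 0$; applied with $a = \log i$ and $b = \log(1+j/i)$ it yields $\psi_{i+j} - \psi_i \le g(\log(1+j/i))$, whence
\begin{equation*}
  a_{i,j}(\psi_{i+j} - \psi_i)
  \le (i+j)\,\Theta(j/i)\, g(\log(1+j/i))
  \le 2j\,\Theta(j/i)\, g(\log(1+j/i)).
\end{equation*}
For $j \le i$, the tangent-line bound for the concave $g$ gives $\psi_{i+j} - \psi_i \le g'(\log i)\,\log(1+j/i) \le g'(0)\,(j/i)$, and since $\Theta \le \theta_0$ and $i + j \le 2i$ one gets $a_{i,j}(\psi_{i+j} - \psi_i) \le 2\theta_0\, g'(0)\, j$. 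Hence \eqref{eq:less-than-ij} holds with $C := \max\{1,\ 2\theta_0\, g'(0)\}$ as soon as
\begin{equation*}
  \Theta(r)\, g(\log(1+r)) \le \tfrac12 \qquad \text{for all } r \ge 1 .
\end{equation*}

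Next I would turn this, and the constraint $\psi_i \le \lambda_i$, into a single upper envelope for $g$. Writing $s = \log(1+r)$, the displayed condition reads $g(s) \le \gamma(s) := \big(2\,\Theta(e^s - 1)\big)^{-1}$ for $s \ge \log 2$, where $\gamma$ is nondecreasing on $[\log 2,+\infty)$ and tends to $+\infty$; extend it by the constant $\gamma(\log 2)$ on $[0,\log 2)$. For the $\lambda$-bound, set $\hat\lambda_i := \inf_{k \ge i} \lambda_k$, which is nondecreasing and tends to $+\infty$ since $\lambda_i \to +\infty$, and let $\hat\lambda(s) := \hat\lambda_{\lfloor e^s \rfloor}$; as $g$ is increasing, $g(s) \le \hat\lambda(s)$ for all $s$ forces $g(\log i) \le \hat\lambda_i \le \lambda_i$. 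Thus it only remains to produce a concave, increasing $g \colon [0,+\infty) \to (0,+\infty)$ with $g'(0) < +\infty$, $g(s) \to +\infty$, and $g \le \Gamma := \min\{\gamma,\, \hat\lambda\}$, the latter being nondecreasing, positive and divergent.

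Finally I would construct such a $g$ explicitly as a piecewise linear function: for $n \ge 1$ choose nodes $u_n \ge \inf\{s : \Gamma(s) \ge n\}$, so that $\Gamma(s) \ge n$ on $[u_n,+\infty)$, taken so that $0 < u_1 < u_2 < \cdots \to +\infty$; then let $g$ interpolate linearly between $(0,g(0))$ and the points $(u_n, v_n)$, with $g(0)$ a small positive number and $v_n \to +\infty$ chosen small enough that $g$ stays below $\Gamma$ on each interval (e.g. $v_n \le n-1$ and $v_1 \le \Gamma(0)$). The spacings $u_{n+1}-u_n$ (and the initial slope) can be arranged so that the successive slopes of $g$ form a nonincreasing sequence, which makes $g$ concave; it is then increasing, unbounded, with $g'(0) < +\infty$, and $g \le \Gamma$ by construction. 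Putting $\psi_i := g(\log i)$ concludes the proof. The main obstacle is exactly this last step: one must keep $g$ below the envelope $\Gamma$, which may grow arbitrarily slowly, while still making it concave and unbounded — this is what forces the careful, increasing choice of the spacings $u_{n+1}-u_n$ (a plain "affine minorant" of $\Gamma$ would typically be bounded). By contrast, the reduction to a property of $g$ through the ansatz $\psi_i = g(\log i)$ and the subadditivity of concave functions is the easy, structural part; one could alternatively organise the divergence of $\psi$ using Lemma \ref{lem:psi-slowly-growing}, but the concavity needed here is cleanest to obtain by the direct construction above.
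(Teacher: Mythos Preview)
Your argument is correct and takes a genuinely different route from the paper. The paper builds $\psi_i$ as a partial sum $\psi_i=\sum_{j\le i}\xi_j$ with increments $\xi_j\le 1/((1+j)\log(1+j))$ obtained from Lemma~\ref{lem:psi-slowly-growing}, and then verifies \eqref{eq:less-than-ij} by a three--case split $j\le i$, $i<j\le i^2$, $j>i^2$; the middle range is handled by the $\log\log$--type growth of $\psi$, and the outer range by the explicit envelope $\psi_i\le 1/\theta(\sqrt{i/2})$. Your ansatz $\psi_i=g(\log i)$ with $g$ concave replaces this by two structural inequalities---the tangent bound for $j\le i$ and subadditivity for $j\ge i$---so the case analysis collapses to two regimes and the requirement on $g$ becomes the single pointwise bound $\Theta(r)\,g(\log(1+r))\le \tfrac12$. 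What you gain is a cleaner reduction; what you pay is that the final step (building a concave, increasing, divergent $g$ under a prescribed nondecreasing divergent envelope $\Gamma$) is slightly more delicate than the paper's Lemma~\ref{lem:psi-slowly-growing}, precisely because you must enforce nonincreasing slopes. Your piecewise--linear construction does this---just make explicit that once the values $v_n$ are fixed with constant increments, the spacings $u_{n+1}-u_n$ are chosen inductively to be nondecreasing (which is always possible since the constraint $u_n\ge\inf\{s:\Gamma(s)\ge n\}$ is only a lower bound), and that $g(0)<v_1\le\Gamma(0)$ so that $g$ is strictly increasing and stays below $\Gamma$ on $[0,u_1]$.
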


\begin{proof}
  First, we may assume that the function $\theta$ given in Hypothesis
  \ref{hyp:aij-almost-linear} is nonincreasing on $[1,+\infty)$, as we can always
  take $\tilde{\theta}(x) := \sup_{y \geq x} \theta(y)$ instead.
  
  We choose a sequence of nonnegative numbers $\{ \xi_i \}$ by applying
  Lemma \ref{lem:psi-slowly-growing} with
  \begin{gather}
    \label{eq:mu_i}
    \mu_i := \frac{1}{(1+i) \log(1+i)},
    \\
    \label{eq:nu_i}
    \nu_i := \min\left\{
      \lambda_i,\,
      \frac{1}{\theta(\sqrt{i/2})},
    \right\}.
  \end{gather}
  Note that the conditions in Lemma \ref{lem:psi-slowly-growing} are
  met: the sequence in the right hand side of \eqref{eq:mu_i} is not
  summable, and the right hand side of \eqref{eq:nu_i} goes to
  $+\infty$ with $i$. If we define $\psi_i := \sum_{j=1}^i \xi_j$,
  then the following is given by Lemma \ref{lem:psi-slowly-growing}:
  \begin{gather*}
    \xi_i
    \leq
    \frac{1}{(1+i) \log(1+i)},
    \qquad
    \psi_i
    \leq
    \frac{1}{\theta(\sqrt{i/2})},
    \quad
    \psi_i \leq \lambda_i,
    \qquad
    i \geq 1,
    \\
    \lim_{i \to +\infty} \psi_i = +\infty.
  \end{gather*}
  These conditions essentially say that $\psi_i$ grows slowlier than
  $\log \log (i)$, slowlier than $\theta(\sqrt{i/2})^{-1}$, and slowlier
  than $\lambda_i$, yet still diverges as $i \to +\infty$.  \medskip

  We can now prove \eqref{eq:less-than-ij} to hold for these
  $\{\psi_i\}$ by distinguishing three cases:

  \noindent{1.} For any $i, j \geq 1$, as $\log(1+k) \geq 1/2$ for all
  $k \geq 1$,
  \begin{equation*}
    \psi_{i+j} - \psi_i = \sum_{k=i+1}^{i+j} \xi_{k}
    \leq
    2 \sum_{k=i+1}^{i+j} \frac{1}{1+k}      
    \leq 2 \log(i+j+1) - 2 \log(i+1)
    \leq \frac{2j}{i}.
  \end{equation*}
  Then, in case $j\le i$ we use the fact that
  $\theta(x) \leq C_\theta$ for some constant $C_\theta > 0$ and all $x > 0$ and have
  \begin{equation*}
    a_{i,j} (\psi_{i+j} - \psi_i)
    \leq
    2\, C_\theta (i+j) \frac{j}{i}
    \leq
    4\,C_\theta\, j,\qquad \text{for } j\le i.
  \end{equation*}
  \noindent{2.} Secondly, for $i < j \leq i^2$,
  \begin{align*}
    \psi_{i+j} - \psi_i &\leq \sum_{k=i+1}^{2i^2} \xi_{k} \leq
    \sum_{k=i+1}^{2i^2} \frac{1}{(k+1)\log(k+1)}
    \\
    &\leq \log \log (2i^2 + 1) - \log \log(i+1) \leq \log \bigg(\frac{2
      \log (\sqrt{3}i)}{\log(i+1)} \bigg)\leq C_1,
  \end{align*}
  for some number $C_1 > 0$. Thus,
  \begin{equation*}
    a_{i,j} (\psi_{i+j} - \psi_i)
    \leq
    C_1 C_\theta (i+j) \leq 2 C_1 C_\theta j.
  \end{equation*}
  \noindent{3.} Finally, for $j > i^2$,
  \begin{equation*}
    \psi_{i+j} - \psi_i
    \leq
    \psi_{i+j}
    = \sum_{k=1}^{i+j} \xi_k
    \leq
    \frac{1}{\theta(\sqrt{(i+j)/2})}
    \leq
    \frac{1}{\theta(\sqrt{j})},
  \end{equation*}
  and as $\theta$ is nonincreasing on $[1,+\infty)$ (we may assume
  this; see the beginning of this proof), we have for all $j > i^2$
  \begin{equation*}
    a_{i,j} (\psi_{i+j} - \psi_i)
    \leq
    (i+j) \theta(j/i) \frac{1}{\theta(\sqrt{j})}
    \leq
    (i+j) \theta(\sqrt{j}) \frac{1}{\theta(\sqrt{j})}
    = i+j \leq 2j.
  \end{equation*}
  Together, these three cases show \eqref{eq:less-than-ij} for all
  $i,j \geq 1$.
\end{proof}

Now we are ready to finish the proof of our result on mass
conservation:

\begin{proof}[Proof of Theorem \ref{thm:mass-conservation}]
  As remarked above (cf. beginning of section \ref{nae}), we will
  prove the estimate \eqref{eq:superlinear} for a regular solution to
  an approximating system, with a constant $C(T)$ that does not depend
  on the regularisation. Then, passing to the limit, the result is
  true for a weak solution thus constructed.

We consider a solution to an approximating system on $[0,+\infty)$, which we still denote by $\{c_i\}_{i \geq 1}$. 
Then, by a version of the de la Vallée-Poussin's Lemma, (see, for instance, Proposition 9.1.1 in \cite{C06} or also proof of Lemma 7 in \cite{citeulike:2964344}), there exists a
nondecreasing sequence of positive numbers $\{\lambda_i\}_{i \geq 1}$ (independent of the regularisation of the initial data) which diverges as $i \to +\infty$, and such that
  \begin{equation}
    \label{eq:phi-initially-finite}
    \int_\Omega \sum_{i=1}^\infty i \,\lambda_i c_i^0\,dx < +\infty.
  \end{equation}
If we define $r_i := \int_\Omega i c_i^0$,
note that this is just the claim that one can find $\lambda_i$ as above with $\sum_i \lambda_i r_i < +\infty$.
\medskip

Taking $\{\psi_i\}$ as given by Lemma \ref{lem:choose-psi-i}, such that $\psi_i \leq \lambda_i$ for all $i \geq 1$, 
we have thus $\int_\Omega \sum_{i=1}^\infty i \,\psi_i\, c_i^0(x) \,dx
< +\infty$. Then, as integrating over $\Omega$ makes the diffusion
term vanish due to the no-flux boundary conditions, we estimate
\begin{equation}    
\label{eq:mc1}
\frac{d}{dt} \int_\Omega \sum_{i=1}^\infty i \, \psi_i c_i\,dx
\leq\frac{1}{2} \int_\Omega \sum_{i,j=1}^\infty a_{i,j} c_i c_j((i+j) \psi_{i+j} - i\, \psi_i - j\,\psi_j)\,dx,
\end{equation}
where we used that the contribution of the fragmentation term is nonpositive, as can be seen from \eqref{eq:fundamental-identity}
with $\varphi_i \equiv i\,\psi_i$, and the fact that
\begin{equation*}
    \sum_{j=1}^{i-1} \beta_{i,j}\, j \psi_j
    \leq
    \psi_i \sum_{j=1}^{i-1} \beta_{i,j}\, j
    =
    i\,\psi_i,
  \end{equation*}
as $\psi_i$ is nondecreasing and \eqref{hyp:frag-conserves-mass} holds.
Continuing from \eqref{eq:mc1}, by the symmetry of the $a_{i,j}$, and using the inequality \eqref{eq:less-than-ij} from Lemma \ref{lem:choose-psi-i}, we have
\begin{equation}
\frac{d}{dt} \int_\Omega \sum_{i=1}^\infty i\,\psi_i\, c_i\,dx
\leq \int_\Omega \sum_{i,j=1}^\infty a_{i,j}\, c_i\, c_j
i\, (\psi_{i+j} - \psi_i)\,dx
\le C \int_\Omega \rho^2\,dx.
  \label{eq:mc2}
\end{equation}
Thus, Proposition \ref{lem:mass-L2} showing $\rho \in L^2(\Omega_T)$
proves that $\int_\Omega \sum_{i=1}^\infty i \, \psi_i\, c_i\,dx$ is
bounded on bounded time intervals. Mass conservation is a direct
consequence of this.
\end{proof}
\begin{rem}[Absence of gelation via tightness]
  It is interesting to sketch an alternative proof showing
  conservation of mass via a tightness argument and without
  establishing superlinear moments. By introducing the superlinear
  test sequence $i\phi_k(i)$ with $ \phi_k(i) = \frac{\log i}{\log k}
  1_{i<k} + 1_{i\ge k}$ for all $k \in \N^*$, we use the weak
  formulation \eqref{eq:fundamental-identity} to see (as above) that
  the fragmentation part is nonnegative for superlinear test
  sequences, and use the symmetry of the $a_{i,j}$ to reduce summation
  over the indices $i\ge j\in \N^*$, which leads
  to the estimate
  \begin{multline*}
    \frac{d}{dt} \int_\Omega \sum_{i=1}^{\infty} c_i\, i \phi_k(i)\,dx 
    \leq\int_\Omega \sum_{i\ge j}^\infty \sum_{j=1}^\infty
    {a_{i,j}}[i c_i] [c_j]
    \left(\frac{\log(1+\frac{j}{i})}{\log(k)}\,\mathbb{I}_{i<k}
    \right. \\
    \left.
      +\frac{j}{i}\left(\frac{\log(1+\frac{i}{j})}{\log(k)}\,\mathbb{I}_{i+j<k}+\frac{\log(\frac{k}{j})}{\log(k)}\,\mathbb{I}_{j<k\le i+j}\right)\right)dx.
  \end{multline*}
  For the first term, we use $\log(1+{j}/{i})\le {j}/{i}$. Then,
  for the second and third terms, we distinguish further the areas where $i/j\le
  \log(k)$ and $i/j> \log(k)$. When $i/j\le \log(k)$, we estimate
  $1+{i}/{j}=1+i/j\le 1+\log(k)$ and ${k}/{j}\le 1+{i}/{j}\le
  1+\log(k)$, respectively.  On the other hand, when $i/j > \log(k)$,
  both the second and the third term are bounded by one.
  Altogether, we get thanks to assumption
  \eqref{hyp:aij-almost-linear}, i.e. $\frac{a_{i,j}}{i}\le
  \text{Cst}\,{\theta(i/j)}$ for $i\le j$:
  \begin{align*}
    \frac{d}{dt} \int_\Omega \sum_{i=1}^{\infty} c_i\, i \phi_k(i)\,dx &\leq \left(\frac{1}{\log(k)}+\frac{\log(1+\log{k})}{\log(k)}\right) \sup\limits_{i\ge j\in\N^*} 
    \left\{\frac{a_{i,j}}{i}\right\}\int_\Omega \rho^2\,dx \\
    &\quad +\int_\Omega \sum_{i\ge j}^\infty \sum_{j=1}^\infty
    [i c_i] [j c_j] \frac{a_{i,j}}{i}\,\mathbb{I}_{i/j> \log(k);j<k}\,dx\\
    &\leq \text{Cst} \left(\frac{\log(1+\log{k})}{\log(k)}
      +\sup\limits_{i/j\ge \log(k)} {\theta\left(\frac{i}{j}\right)}\right)\int_\Omega \rho^2\,dx
  \end{align*}
  and the right hand side tends to zero as $k\to\infty$.  Hence, using
  Proposition~\ref{lem:mass-L2} and integrating over a time interval
  $[0,T]$, we get thanks to a tightness argument
 that the mass is indeed
  conserved, and no gelation occurs.
\end{rem}

\section{Third Application: Fragmentation due to collisions in
  dimension 1}
\label{sec:quadratic}

\begin{proof}[Proof of Theorem \ref{th}]
We introduce $(c_i^M)_M$  a sequence of smooth solutions for a truncated version of eq. (\ref{cf2}). We first observe that
Proposition \ref{lem:mass-L2} still holds thanks to the duality estimate, that is $\rho := \sum_i i\, c_i \in L^2(\Omega_T)$ for all $T>0$. Estimate (\ref{eq:Q+bound}), in which only the coagulation kernel appears, also holds. Moreover, thanks to (\ref{q3}),
$$ 
\sum_{k,l}\sum_{\\max\{k,l\}>i} b_{k,l}\, c_k\,c_l \, \beta_{ikl}
\le \text{Cst}_i \sum_k\sum_{l} (k+l)\, c_k\,c_l  \le \text{Cst}_i\, \rho^2 \in L^1(\Omega_T). 
$$
The loss terms
$$ 
\sum_{k=1}^{\infty} a_{i,k}  \,c_i\, c_k,\qquad
\sum_{k=1}^{\infty} b_{i,k}\,c_i\,c_k 
$$ 
lie then in $L^1(\Omega_T)$ by integration of the
equation on $[0,T] \times \Omega$.
\bigskip

Using now eq. (\ref{cf2}), we see that  (for all $i\in \N^*$)
$ \pa_t c_i^M - d_i \pa_{xx} c_i^M  $ belongs to a bounded subset of $L^1(\Omega_T)$. As a
consequence, $c_i^M$ belongs (for all $i\in \N^*$) to a compact subset of $L^{3-\var}([0,T] \times \Omega)$ for all $T>0$
and $\var>0$. We denote (for all $i\in \N^*$) by $c_i$ a limit (in $L^{3-\var}([0,T] \times \Omega)$ strong)
 of a subsequence of $(c_i^M)_{M\in\N}$ (still denoted by  $(c_i^M)_{M\in\N}$).
\bigskip

We now pass to the limit in all terms of the r.h.s. of eq. (\ref{cf2}). The first term can easily be dealt with, since it consists of a finite sum. Then, we pass to the limit in the second term:
\begin{gather*}
\int_0^T\!\! \int_{\Omega} \bigg| \sum_{k=1}^{\infty} a_{i,k} \, c_i^n  \, c_k^n 
 - \sum_{k=1}^{\infty} a_{i,k} \, c_i  \, c_k \bigg| \, dx dt \\
\le \int_0^T\!\! \int_{\Omega} \bigg| \sum_{k=1}^{K} a_{i,k} \, c_i^n  \, c_k^n
 - \sum_{k=1}^{K} a_{i,k} \, c_i  \, c_k \bigg| \, dx dt 
 +\, 2 \, \|\rho\|_{L^2}^2 \,  \sup_{k > K} 
\left\{\frac{a_{i,k}}k\right\} . 
\end{gather*}

The second part of this expression is small when $K$ is large enough thanks to assumption (\ref{nas1}), (\ref{nas2}),
 while the first part tends to $0$ 
for all given $K$. 
\bigskip

The fourth term of the r.h.s. of eq. (\ref{cf2}) can be treated exactly in the same way.
We now turn to the third term:
\begin{gather*}
 \int_0^T\!\! \int_{\Omega} \bigg| 
\sum_{k,l=1}^{\infty}\sum_{i<\max\{k,l\}}  b_{k,l}\, c_k^n\,c_l^n \, \beta_{i,k,l}
- \sum_{k,l=1}^{\infty}\sum_{i<\max\{k,l\}}  b_{k,l}\, c_k\,c_l \, \beta_{i,k,l} \bigg| \, dx dt \\
 \le \int_0^T\!\! \int_{\Omega} \bigg| 
\sum_{k,l=1}^{K}\sum_{i<\max\{k,l\}}^{k\le K, l \le K}  b_{k,l}\, c_k^n\,c_l^n \, \beta_{i,k,l}
- \sum_{k,l=1}^{K}\sum_{i<\max\{k,l\}}^{k\le K, l \le K}  b_{k,l}\, c_k\,c_l \, \beta_{i,k,l} \bigg| \, dx dt \\
 +\,  4\, \|\rho\|_{L^2}^2 \, \sup_{l\ge K} \sup_{k \in \N} \left\{\frac{b_{k,l}}{kl} \, \beta_{i,k,l}\right\} . 
\end{gather*}

Once again, the second term is small when $K$ is large enough thanks
to assumption (\ref{nas1}), (\ref{nas2}), while the first term tends
to $0$ for all given $K$.
\end{proof}

\section*{Acknowledgements}

KFs work has been supported by the KAUST Award No. KUK-I1-007-43, made
by King Abdullah University of Science and Technology (KAUST). JAC was
supported by the project MTM2008-06349-C03-03 of the Spanish
\emph{Ministerio de Ciencia e Innovaci\'{o}n}.
 LD was supported by the
french project ANR CBDif.
The authors acknowledge partial support of the trilateral project
Austria-France-Spain (Austria: FR 05/2007 and ES 04/2007, Spain:
HU2006-0025 and HF2006-0198, France: Picasso 13702TG and Amadeus 13785 UA).
 LD and KF also wish to acknowledge
the kind hospitality of the CRM of Barcelona.

\section{Appendix: A duality lemma}
We recall here results from e.g. \cite{PSch, citeulike:3798030}.
We start with the

\begin{lem}
  \label{lem:dual-bound}
  Assume that $z: \Omega_T \to [0, +\infty)$ satisfies
  \begin{alignat}{2}
    \partial_t z + M \Delta z &= - H
    &\qquad& \text{ on } \Omega,
\nonumber    \\
    \nabla z \cdot n &= 0
    && \text{ on } \partial \Omega,
\label{eq:nl-diffusion-dual0}
    \\
    z(T,x) &= 0
    && \text{ on } \Omega,
\nonumber
  \end{alignat}
where $H \in L^2(\Omega_T)$, and $d_1 \geq M \geq d_0 > 0$. Then,
\begin{equation}
    \label{eq:dual-estimate}
    \norm{ z(0,\cdot) }_{L^2(\Omega)}
    \leq
    \left( 1 + \frac{d_1}{d_0} \right)
    T \norm{H}_{L^2(\Omega_T)}.
  \end{equation}
\end{lem}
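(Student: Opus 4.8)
The plan is to run the classical energy--duality argument for the backward parabolic problem \eqref{eq:nl-diffusion-dual0}. The one conceptual point is that $M$ is only assumed bounded, with no control whatsoever on $\nabla M$, so the argument must be organised so that $M$ is never differentiated; this is exactly what one gains by testing the equation against $-\Delta z$ rather than against $z$ itself. By a routine approximation --- regularise $M$ into smooth functions still valued in $[d_0,d_1]$ and $H$ into smooth functions, solve the corresponding problems, and pass to the limit in the final inequality --- I may assume that $z$ and $H$ are smooth, so that all the manipulations below are classical.

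First I would establish an $L^2(\Omega_T)$ bound on $\Delta z$. Multiplying the equation $\partial_t z + M\,\Delta z = -H$ of \eqref{eq:nl-diffusion-dual0} by $-\Delta z$ and integrating over $\Omega$, the boundary term produced on integrating $-\int_\Omega\partial_t z\,\Delta z\,dx$ by parts vanishes because $\nabla z\cdot n=0$, so that $-\int_\Omega\partial_t z\,\Delta z\,dx=\int_\Omega\nabla(\partial_t z)\cdot\nabla z\,dx=\tfrac12\frac{d}{dt}\int_\Omega|\nabla z|^2\,dx$; one is left with
\[
\tfrac12\frac{d}{dt}\int_\Omega|\nabla z|^2\,dx
= \int_\Omega M\,(\Delta z)^2\,dx + \int_\Omega H\,\Delta z\,dx .
\]
Integrating over $[0,T]$ and using the terminal condition $z(T,\cdot)=0$ (hence $\nabla z(T,\cdot)=0$) gives
\[
\int_0^T\!\!\int_\Omega M\,(\Delta z)^2
= -\tfrac12\int_\Omega|\nabla z(0,\cdot)|^2 - \int_0^T\!\!\int_\Omega H\,\Delta z
\ \le\ -\int_0^T\!\!\int_\Omega H\,\Delta z .
\]
Estimating the right-hand side by the Cauchy--Schwarz inequality with the weight $M$ --- writing $H\,\Delta z=\tfrac{H}{\sqrt M}\cdot\sqrt M\,\Delta z$ --- and absorbing the factor $\bigl(\int_0^T\!\!\int_\Omega M\,(\Delta z)^2\bigr)^{1/2}$, I would obtain
\[
\int_0^T\!\!\int_\Omega M\,(\Delta z)^2 \ \le\ \int_0^T\!\!\int_\Omega \frac{H^2}{M} \ \le\ \frac1{d_0}\,\norm{H}_{L^2(\Omega_T)}^2 ,
\]
and hence, using $M\ge d_0$ once more, $\norm{\Delta z}_{L^2(\Omega_T)} \le \tfrac1{d_0}\,\norm{H}_{L^2(\Omega_T)}$.

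It then only remains to read off the bound on $z(0,\cdot)$. From the equation $\partial_t z=-M\,\Delta z-H$ and $M\le d_1$,
\[
\norm{\partial_t z}_{L^2(\Omega_T)} \ \le\ d_1\,\norm{\Delta z}_{L^2(\Omega_T)} + \norm{H}_{L^2(\Omega_T)} \ \le\ \Bigl(1+\tfrac{d_1}{d_0}\Bigr)\norm{H}_{L^2(\Omega_T)} .
\]
Since $z(T,\cdot)=0$, we have $z(0,x)=-\int_0^T\partial_t z(s,x)\,ds$ for a.e.\ $x\in\Omega$, so the Cauchy--Schwarz inequality in $s$ gives $|z(0,x)|^2\le T\int_0^T|\partial_t z(s,x)|^2\,ds$; integrating over $\Omega$,
\[
\norm{z(0,\cdot)}_{L^2(\Omega)} \ \le\ \sqrt T\,\norm{\partial_t z}_{L^2(\Omega_T)} \ \le\ \Bigl(1+\tfrac{d_1}{d_0}\Bigr)\sqrt T\,\norm{H}_{L^2(\Omega_T)} ,
\]
which establishes the duality estimate \eqref{eq:dual-estimate} --- in fact with the sharp power $\sqrt T$ in place of $T$, as is already seen for constant $M$ on the heat equation; since only some polynomial dependence on $T$ is used anywhere below, this plays no role. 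The only genuine obstacle is the poor regularity of $M$, and it is precisely what dictates the choice of test function $-\Delta z$: multiplying instead by $z$ would generate a term $\int_\Omega z\,\nabla M\cdot\nabla z\,dx$ that is uncontrollable without a bound on $\nabla M$. Once this is understood, everything else is a direct computation.
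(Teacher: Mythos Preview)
Your argument is correct and follows essentially the same route as the paper: multiply the equation by $\pm\Delta z$ (so that $M$ is never differentiated), integrate in space--time using the terminal and Neumann conditions to obtain $\norm{\Delta z}_{L^2(\Omega_T)}\le d_0^{-1}\norm{H}_{L^2(\Omega_T)}$, and then read off the bound on $\partial_t z$ and hence on $z(0,\cdot)$. The only cosmetic differences are your use of a weighted Cauchy--Schwarz (the paper uses the unweighted one and the lower bound $M\ge d_0$) and your sharper final step yielding $\sqrt{T}$ rather than $T$, which, as you note, is immaterial for the application.
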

\begin{proof}[Proof of Lemma \ref{lem:dual-bound}]
  Calculating the time derivative of $\int_\Omega \abs{\nabla z}^2$,
  or alternatively multiplying eq. \eqref{eq:nl-diffusion-dual0} by
  $\Delta z$ and integrating on $\Omega$, we obtain
  \begin{equation*}
    -\frac{1}{2} \frac{d}{dt} \int_\Omega \abs{\nabla z}^2\,dx
    + \int_\Omega M (\Delta z)^2\,dx
    =
    \int_\Omega - H \Delta z\,dx,
  \end{equation*}
  where the boundary condition on $z$ was used. Integrating on
  $[0,T]$ and taking into account that $z(T,x) = 0$,
  \begin{align}
    \frac{1}{2} \int_\Omega \abs{\nabla z(0,\cdot)}^2\,dx
    + \int_{\Omega_T} M (\Delta z)^2\,dxdt 
    &=
    \int_{\Omega_T} H \Delta z\,dxdt \nonumber\\
    &\leq
    \norm{H}_{L^2(\Omega_T)} \norm{\Delta z}_{L^2(\Omega_T)}.
    \label{eq:p2}
  \end{align}
Using that $M \geq d_0$ we see that $\int_{\Omega_T} M (\Delta z)^2
  \geq d_0 \norm{\Delta z}_{L^2(\Omega_T)}^2$, so \eqref{eq:p2} implies
  \begin{equation*}
    d_0 \norm{ \Delta z }_{L^2(\Omega_T)}
    \leq
    \norm{H}_{L^2(\Omega_T)}.
  \end{equation*}
  From this and \eqref{eq:nl-diffusion-dual0} we have
  \begin{align*}
    \norm{ \partial_t z }_{L^2(\Omega_T)}
    &\leq
    \norm{ M \Delta z }_{L^2(\Omega_T)} + \norm{H}_{L^2(\Omega_T)}
    \\
    &\leq
    d_1 \norm{ \Delta z }_{L^2(\Omega_T)} + \norm{H}_{L^2(\Omega_T)}
    \leq
    \left( 1 + \frac{d_1}{d_0} \right) \norm{H}_{L^2(\Omega_T)}.
  \end{align*}
  Finally,
\begin{equation*}
    \norm{z(0,\cdot)}_{L^2(\Omega)}
    \leq
    \int_0^T \norm{\partial_s z_s}_{L^2(\Omega)} \,ds
    \leq
    \left( 1 + \frac{d_1}{d_0} \right)\, T \norm{H}_{L^2(\Omega_T)}.
  \end{equation*}
\end{proof}

\begin{lem}
  \label{lem:nl-diffusion-L2-estimate}
  Assume that $\rho: \Omega_T \to [0, +\infty)$ and satisfies
  \begin{alignat}{2}
    \label{eq:nl-diffusion}
    \partial_t \rho - \Delta (M \rho) &\leq 0
    \qquad \text{ on } \Omega,
    \\
    \nabla (\rho\,M) \cdot n &= 0
    \qquad \text{ on } \partial \Omega,\nonumber
  \end{alignat}
  where $M: \Omega_T \to \RR$ is a function which satisfies $d_1 \geq
  M \geq d_0 > 0$ for some numbers $d_1$, $d_0$.
  Then,
  \begin{equation*}
    \norm{\rho}_{L^2(\Omega_T)}
    \leq
    \left( 1 + \frac{d_1}{d_0} \right)\, T \norm{\rho(0,\cdot)}_2.
  \end{equation*}
\end{lem}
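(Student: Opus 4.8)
The plan is to derive the estimate for $\rho$ by a duality argument, using Lemma~\ref{lem:dual-bound} as the core tool. The idea is that although we cannot directly control $\norm{\rho}_{L^2(\Omega_T)}$ via energy methods (because $\rho$ only satisfies an inequality and $M$ depends on $t,x$), we can test the inequality against a carefully chosen dual solution whose regularity is controlled by Lemma~\ref{lem:dual-bound}.

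First I would take an arbitrary nonnegative $H \in L^2(\Omega_T)$ and let $z$ be the solution of the backward dual problem \eqref{eq:nl-diffusion-dual0}, i.e. $\partial_t z + M \Delta z = -H$ with homogeneous Neumann boundary conditions and terminal condition $z(T,\cdot) = 0$; here one should note that since $M \geq d_0 > 0$, this backward parabolic problem is well-posed and the maximum principle guarantees $z \geq 0$ on $\Omega_T$ (as $H \geq 0$ and the terminal data vanish). Then I would multiply the differential inequality \eqref{eq:nl-diffusion} for $\rho$ by $z$ and integrate over $\Omega_T$. Integrating by parts in space (the boundary terms vanish thanks to the Neumann conditions on both $\rho M$ and $z$) and in time (picking up the boundary term at $t=0$ from $\partial_t \rho$, since $z(T,\cdot)=0$), one obtains, using $\rho \geq 0$ and $z \geq 0$ to handle the inequality in the correct direction,
\begin{equation*}
  \int_{\Omega_T} \rho\, H \,dxdt
  = \int_{\Omega_T} \rho \,(-\partial_t z - M\Delta z)\,dxdt
  \leq \int_\Omega \rho(0,x)\, z(0,x)\,dx
  \leq \norm{\rho(0,\cdot)}_{L^2(\Omega)}\, \norm{z(0,\cdot)}_{L^2(\Omega)}.
\end{equation*}

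Next I would invoke Lemma~\ref{lem:dual-bound} to bound $\norm{z(0,\cdot)}_{L^2(\Omega)} \leq \left(1 + \frac{d_1}{d_0}\right) T \norm{H}_{L^2(\Omega_T)}$, so that
\begin{equation*}
  \int_{\Omega_T} \rho\, H\,dxdt
  \leq \left(1 + \frac{d_1}{d_0}\right) T\, \norm{\rho(0,\cdot)}_{L^2(\Omega)}\, \norm{H}_{L^2(\Omega_T)}.
\end{equation*}
Since this holds for every nonnegative $H \in L^2(\Omega_T)$ (and $\rho \geq 0$), taking the supremum over such $H$ with $\norm{H}_{L^2(\Omega_T)} \leq 1$ identifies the left-hand side supremum as $\norm{\rho}_{L^2(\Omega_T)}$ by duality of $L^2$, which yields the claimed bound. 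The main obstacle — and the point that needs care — is the justification of the integration by parts: one must know a priori that $\rho$ and $z$ are regular enough for all these manipulations to be legitimate. As the paper's convention (stated at the beginning of Section~\ref{nae}) makes clear, the clean way around this is to run the entire argument on the smooth truncated approximating system, where $\rho$ solves an honest equation and all integrations by parts are valid, and then pass to the limit; the duality estimate, being an a priori bound uniform in the truncation, survives the limit. A secondary technical point is ensuring the dual problem \eqref{eq:nl-diffusion-dual0} has a sufficiently smooth nonnegative solution for $L^2$ data $H$, which is standard parabolic regularity together with the maximum principle, and one may first prove it for smooth $H$ and then argue by density.
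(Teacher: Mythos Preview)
Your proof is correct and follows essentially the same duality argument as the paper: you test against the nonnegative solution $z$ of the backward dual problem \eqref{eq:nl-diffusion-dual0}, integrate by parts to obtain $\int_{\Omega_T}\rho H \le \int_\Omega \rho(0)z(0)$, apply Lemma~\ref{lem:dual-bound} to bound $\norm{z(0,\cdot)}_{L^2(\Omega)}$, and conclude by $L^2$ duality. Your additional remarks on regularity and on running the argument at the level of the truncated system are consistent with the paper's stated convention.
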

\begin{proof}[Proof of Lemma \ref{lem:nl-diffusion-L2-estimate}]
  Consider the dual problem  \eqref{eq:nl-diffusion-dual0}
    -- \eqref{eq:dual-estimate}
  for an arbitrary function $H \in L^2(\Omega_T)$, with $H \geq
  0$. Then, $z\ge 0$, 
  and integrating by parts in eq. \eqref{eq:nl-diffusion-dual0}, one finds
  that
  \begin{align*}
    \int_{\Omega_T} \rho H\,dxdt
    &=
    - \int_{\Omega_T} \rho (\partial_t z + M \Delta z)\,dxdt
    \\
    &=
    \int_{\Omega_T} z (\partial_t \rho - \Delta(\rho M))\,dxdt
    + \int_\Omega \rho(0,\cdot) \,z(0,\cdot)\,dxdt
    \leq
    \int_\Omega  \rho(0,\cdot) \,z(0,\cdot)\,dxdt,
  \end{align*}
  where we have used eq. \eqref{eq:nl-diffusion}, eq.  \eqref{eq:dual-estimate} and the boundary
  conditions on $\rho\, M$ and $z$.
   Hence, for any nonnegative function $H \in L^2(\Omega_T)$,
  \begin{equation*}
    \int_{\Omega_T} \rho H\,dxdt
    \leq
    \norm{\rho(0,\cdot)}_{L^2(\Omega)} \norm{z(0,\cdot)}_{L^2(\Omega)},
  \end{equation*}
  and thanks to  Lemma \ref{lem:dual-bound},
  \begin{equation*}
    \int_{\Omega_T} \rho H\,dxdt
    \leq
    ( 1 + d_1/d_0)\, T \norm{\rho(0,\cdot)}_{L^2(\Omega)} \norm{H}_{L^2(\Omega_T)}.
  \end{equation*}
Remembering that $\rho \ge 0$, we obtain
  by duality:
  \begin{equation*}
    \norm{\rho}_{L^2(\Omega_T)}
    \leq
     (1 + d_1/d_0)\, T  \norm{\rho(0,\cdot)}_{L^2(\Omega)} .
  \end{equation*}
  This proves the lemma.
\end{proof}

\bibliographystyle{abbrv}
\bibliography{bibliography3}

\end{document}